\newtheorem{theorem}{Theorem}[section]
\newtheorem{dfn}[theorem]{Definition}
\newtheorem{conj}{Conjecture}
\newtheorem{lem}[theorem]{Lemma}
\newtheorem{prop}[theorem]{Proposition}
\newtheorem*{remark}{Remark}
\theoremstyle{definition}
\newtheorem{exa}{Example}
\begin{document}

\title{Reed-Solomon codes over small fields with constrained generator matrices}

\author{Gary Greaves\thanks{G.G. was supported by  the Singapore
Ministry of Education Academic Research Fund (Tier 1);  grant number:  RG127/16.}  { }and Jeven Syatriadi, \\
  School of Physical and Mathematical Sciences, \\
  Nanyang Technological University, \\
   21 Nanyang Link, Singapore 637371\\
 \tt{\{gary,jsyatriadi\}@ntu.edu.sg}
}
\date{}

\maketitle

\begin{abstract}
  We give constructions of some special cases of $[n,k]$ Reed-Solomon codes over finite fields of size at least $n$ and $n+1$ whose generator matrices have constrained support.
  Furthermore, we consider a generalisation of the GM-MDS conjecture proposed by Lovett in 2018.
  We show that Lovett's conjecture is false in general and we specify when the conjecture is true.
\end{abstract}

\section{Introduction}

A linear code of length $n$, dimension $k$, distance $n-k+1$, and alphabet size $q$ is called an \emph{$[n,k]_q$ MDS code}~\cite{book}.
For certain applications (see \cite{dau2013balanced},\cite{dau2014existence},\cite{dau2015simple},\cite{halbawi2014distributed},\cite{yan2014weakly}), one would like to construct an $[n,k]_q$ MDS code having:
\begin{enumerate}
  \item a generator matrix with prescribed zero pattern;
  \item a small alphabet size (i.e., $q$ close to $n$).
\end{enumerate}

It was conjectured by Dau et al.~\cite{dau2014existence} that, for certain constraints on the zero pattern of a generator matrix (see Definition~\ref{mdscond}), there exist $[n,k]_q$ MDS codes for all prime powers $q \geqslant n+k-1$.
This conjecture (called the GM-MDS conjecture), which stimulated a lot of interest from the community \cite{birs, halbawi2014distributed, heidarzadeh2017algebraic, yan2014weakly, yildiz2018further}, has recently been proved by Lovett~\cite{lovett2018proof} and independently by Yildiz and Hassibi~\cite{yildiz2018optimum}.

For a positive integer $n$, define $[n] := \{1,\dots,n\}$.

\begin{dfn}[MDS Condition]\label{mdscond}
Let $\mathcal{S}=\{S_1,\dots,S_k\}$ be a set system where $S_i\subseteq [n]$ for each $i\in [k]$.
We say that $\mathcal{S}$ satisfies the MDS condition if, for any nonempty $I\subseteq [k]$, we have
$$
|I|+\left| \bigcap_{i\in I} S_i \right| \leqslant k.
$$
\end{dfn}

Note that if $\mathcal{S}=\{S_1,\dots,S_k\}$ satisfies the MDS condition then $|S_i|\leqslant k-1$ for each $i\in[k]$.

Throughout, we use $\mathbb{F}_q$ to denote the finite field with $q$ elements where $q$ is a prime power, $K$ to denote a general field, and $x, x_1, \dots, x_n$ are formal variables.
We use $\mathbb F_q(x_1,\dots,x_n)$ to denote the field of rational functions with variables $x_1,\dots, x_n$ and coefficients from the field $\mathbb{F}_q$ and $\mathbb F_q(x_1,\dots,x_n)[x]$ denotes the ring of (univariate) polynomials over $\mathbb F_q(x_1,\dots,x_n)$.

For positive integers $k$ and $n$, a $k\times n$ $\mathbb{F}_q$-matrix $A$ is called \textit{MDS} if every $k\times k$ submatrix of $A$ is invertible.
Note that a code is MDS if and only if its generator matrices are MDS.
Thus Lovett and Yildiz-Hassibi (independently) proved the following result.

\begin{theorem}[GM-MDS Conjecture in  {\cite{dau2014existence}}]\label{gmmds}
Let $\mathcal{S}=\{S_1,\dots,S_k\}$ be a set system where $S_i\subseteq [n]$ for all $i\in [k]$.
Suppose $\mathcal{S}$ satisfies the MDS condition. 
Then for any finite field $\mathbb{F}_q$ with $q \geqslant n+k-1$, there exists a $k\times n$ MDS matrix $A$ over $\mathbb{F}_q$ with $A_{i,j}=0$ whenever $j\in S_i$.
\end{theorem}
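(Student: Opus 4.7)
My plan is to construct the desired matrix as a Reed--Solomon evaluation matrix whose defining polynomials encode the prescribed zero pattern. I introduce formal indeterminates $x_1, \ldots, x_n$, eventually to be specialised to distinct elements of $\mathbb{F}_q$. For each $i \in [k]$, I seek a polynomial of the form
\[
f_i(x) \;=\; \prod_{j \in S_i}(x - x_j)\cdot g_i(x),
\]
with $\deg g_i \leqslant k - 1 - |S_i|$, which is non-negative since the MDS condition forces $|S_i| \leqslant k-1$. The $k \times n$ matrix $A$ with $A_{i,j} := f_i(x_j)$ then automatically satisfies the zero-pattern condition, and every entry is a polynomial in the $x_j$ and in the free coefficients of the $g_i$.

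The next step reduces MDS-ness to a single non-vanishing statement. If the $x_j$ are distinct elements of $\mathbb{F}_q$ and the polynomials $f_1, \ldots, f_k$ are linearly independent over $\mathbb{F}_q$, then every $k\times k$ submatrix of $A$ is invertible: for any $k$-subset $T \subseteq [n]$ the evaluation map on $\mathbb{F}_q[x]_{<k}$ at the $k$ points $\{x_j\}_{j \in T}$ is a $\mathbb{F}_q$-isomorphism, so the resulting $k\times k$ matrix is merely the change-of-basis matrix between the $f_i$ and the standard basis. Hence it suffices to find distinct $x_j \in \mathbb{F}_q$ and coefficients for the $g_i$ making \emph{some} minor, say $\Delta := \det\bigl(f_i(x_j)\bigr)_{i \in [k],\, j \in [k]}$, nonzero. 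Viewing $\Delta$ as a polynomial in the formal variables and applying the Schwartz--Zippel lemma, I obtain a valid $\mathbb{F}_q$-evaluation provided $\Delta$ is not identically zero and $q$ exceeds the total degree of $\Delta \cdot \prod_{j < j'}(x_j - x_{j'})$; a careful degree count should recover the bound $q \geqslant n + k - 1$.

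The main obstacle is showing that $\Delta$ is generically nonzero, and this is the only place where the MDS condition is genuinely used. A natural first observation, via Hall's marriage theorem applied to the bipartite graph on $[k] \sqcup [k]$ with edge $(i,j)$ when $j \notin S_i$, is that the MDS condition translates \emph{exactly} into Hall's condition and so supplies a bijection $\sigma : [k] \to [k]$ with $\sigma(i) \notin S_i$ for all $i$; this guarantees at least one nonvanishing diagonal in the expansion of $\Delta$. However, cancellation between permutations is not ruled out by this alone. To preclude cancellation I would make the $g_i$ explicit, e.g.\ $g_i(x) = \prod_{j \in R_i}(x - x_j)$ for carefully chosen $R_i \subseteq [n] \setminus S_i$ of size $k - 1 - |S_i|$, so that $\Delta$ simplifies (after row and column operations) to a product of Vandermonde-type expressions. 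Producing the $R_i$ compatibly --- or alternatively setting up an induction on $\sum_i |S_i|$ that peels a single element off some $S_i$ while preserving the MDS condition --- is the genuine combinatorial content of the theorem and the step where I expect the proof to be delicate.
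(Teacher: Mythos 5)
First, note that the paper does not prove Theorem~\ref{gmmds} itself: it is quoted as a known result, established by Lovett and by Yildiz--Hassibi, and your outline is essentially the standard framework those proofs (and the paper's own constructions) operate in. Your reduction is sound: encoding the zero pattern by $f_i = \prod_{j\in S_i}(x-x_j)\,g_i$, observing that any $k\times k$ minor of the evaluation matrix factors as (coefficient matrix of the $f_i$) times a Vandermonde matrix in distinct points, and thereby reducing MDS-ness to the linear independence of $f_1,\dots,f_k$. The problem is that the step you defer --- showing that the $g_i$ can be chosen so that $\Delta = \det C(\{f_1,\dots,f_k\})$ is not identically zero whenever $\mathcal{S}$ satisfies the MDS condition --- \emph{is} the GM-MDS theorem; everything before it is routine. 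Your Hall's-theorem observation only produces one nonvanishing diagonal term and, as you yourself note, does not rule out cancellation. Of your two proposed completions, the ``choose $R_i$ so that $\Delta$ splits into Vandermonde-type factors'' route is known to work only under hypotheses strictly stronger than the MDS condition --- this is exactly what Theorems~\ref{thm:main1} and~\ref{thm:main2} of this paper do, with the nested-intersection or $|S_i|\leqslant i-1$ assumptions supplying the extra structure --- and the induction that does work in general (Lovett's minimal-counterexample argument on the set system, using tight sets and merging/splitting operations) is substantially more involved than ``peeling one element off some $S_i$.'' So the proposal has a genuine gap precisely at the theorem's combinatorial core.

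A secondary issue: your Schwartz--Zippel step as stated will not recover the bound $q\geqslant n+k-1$. The total degree of $\Delta\cdot\prod_{j<j'}(x_j-x_{j'})$ is on the order of $k(k-1)+\binom{n}{2}$, so the total-degree form of Schwartz--Zippel would demand a far larger field. The correct bound comes from specialising the variables $x_1,\dots,x_n$ one at a time: at each step one must avoid the at most $k-1$ roots of a univariate polynomial (the degree of $\Delta$ in each single variable $x_j$ is at most $k-1$) together with the at most $n-1$ previously chosen values, giving $q > (k-1)+(n-1)$. This is a fixable but real defect in the write-up as it stands.
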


Both the proofs of the GM-MDS conjecture by Lovett and Yildiz-Hassibi use the Schwartz-Zippel Lemma~\cite{schwartz80,zippel79}.

Our main contribution is to provide two constructions of $k \times n$ MDS matrices over $\mathbb F_q$ satisfying a support constraint that is slightly stronger than the MDS condition but with $q \geqslant n$ or $q \geqslant n+1$.
(See Theorem~\ref{thm:main1} and Theorem~\ref{thm:main2}.)
The constructions presented herein
are elementary and, in particular, rely on neither the Schwartz-Zippel lemma nor the GM-MDS conjecture.
Moreover, the values of $x_i$ in our constructions can be chosen arbitrarily as long as they are distinct.

Lovett conjectured a slight generalisation of the GM-MDS conjecture, which we state in Section~\ref{sec:generalized_gm_mds}.
In Section~\ref{sec:counterexamples} and in Section~\ref{sec:special_case}, we show that Lovett's conjecture is false and to what extent it is true.

\section{Main result and application}\label{sec:main_result_and_application}

In this section, we present two constructions of $k \times n$ MDS matrices over $\mathbb F_q$ that have constrained support, where $q = n$ or $q=n+1$.
Our constructions require neither the Schwartz-Zippel lemma nor the GM-MDS conjecture.

Let $P=\{p_1,p_2,\dots,p_k\} \subseteq K[x]$ for some field $K$ (not necessarily finite) where, for each $i\in[k]$, the degree of $p_i$ is at most $k-1$.
We write $p_i(x)=c_{i,1}x^{k-1}+\dots+c_{i,k-1}x+c_{i,k}$ for each $i\in [k]$.
Define the \emph{coefficient matrix} $C(P)$ by $C(P)_{i,j}=c_{i,j}$ for all $i,j\in [k]$.

The proof of the following lemma is standard.

\begin{lem}\label{lem:linindpols}
  The polynomials in $P$ are linearly independent over the field $K$ if and only if the determinant of $C(P)$ is nonzero in $K$.
\end{lem}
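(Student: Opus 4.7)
The plan is to identify the coefficient matrix $C(P)$ as the matrix whose rows are the coordinate vectors of $p_1, \dots, p_k$ with respect to a fixed basis of the finite-dimensional $K$-vector space $V := \{p \in K[x] : \deg p \leqslant k-1\}$, and then invoke the standard fact from linear algebra that $k$ vectors in a $k$-dimensional space are linearly independent if and only if the $k \times k$ matrix they form has nonzero determinant.

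More concretely, I would first observe that $\mathcal{B} := (x^{k-1}, x^{k-2}, \dots, x, 1)$ is a $K$-basis of $V$, so the coordinate map $\varphi : V \to K^k$ sending $c_{1} x^{k-1} + \dots + c_{k-1} x + c_{k}$ to $(c_1, c_2, \dots, c_k)$ is a $K$-linear isomorphism. By construction of $C(P)$, the $i$th row of $C(P)$ is exactly $\varphi(p_i)$. Since $\varphi$ is an isomorphism, $p_1, \dots, p_k$ are linearly independent in $V$ if and only if $\varphi(p_1), \dots, \varphi(p_k)$ are linearly independent in $K^k$, and the latter holds if and only if the rows of $C(P)$ are $K$-linearly independent, i.e.\ $\det C(P) \neq 0$.

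There is no real obstacle here; this is a direct application of the correspondence between a finite-dimensional vector space and its coordinate representation, together with the standard determinant criterion for linear independence of a square family of vectors. The only thing worth being careful about is to note that the bound $\deg p_i \leqslant k-1$ is exactly what guarantees $p_i \in V$ so that the coordinate vector has length $k$ and $C(P)$ is square.
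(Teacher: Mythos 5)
Your proof is correct and is precisely the standard argument the paper alludes to when it omits the proof: identify each $p_i$ with its coordinate vector in the basis $(x^{k-1},\dots,x,1)$, so that the rows of $C(P)$ are these coordinate vectors, and apply the determinant criterion for linear independence of $k$ vectors in a $k$-dimensional space. No issues.
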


Let $S$ be a multiset where all of its elements are from the set $[n]$ and let $\mathbf{0}_{k-1}$ denote the zero (row) vector of size $k-1$.
Define the polynomial $p=p(S)\in \mathbb{F}_q(x_1,\dots,x_n)[x]$ as $p(x)\coloneqq \prod_{i\in S}(x-x_i)$.

\begin{lem}\label{lem:factorisation}
Let $\mathcal{S}=\{S_1,S_2,\dots,S_k\}$ be a multiset system such that, for each $i\in [k]$, we have $|S_i|\leqslant k-1$ and all elements of $S_i$ are from the set $[n]$.
Let $P=\{p(S_1),p(S_2),\dots,p(S_k)\}\subseteq \mathbb{F}_q(x_1,\dots,x_n)[x]$.
Suppose there exists $\xi\in [n]$ such that $\xi\in \bigcap_{i=1}^{k-1}S_i$.
Then
$$
C(P)=\begin{pmatrix}
  C(P') & \mathbf{0}_{k-1}^\top \\
  {\mathbf{0}_{k-1}} & 1
\end{pmatrix} \cdot C(Q)
$$
where $P'=\{p(S_1\backslash\{\xi\}),p(S_2\backslash\{\xi\}),\dots,p(S_{k-1}\backslash\{\xi\})\}$ and $Q=\{x^{k-2}(x-x_{\xi}),x^{k-3}(x-x_{\xi}),\dots,x(x-x_{\xi}),x-x_{\xi},p(S_k)\}$.
\end{lem}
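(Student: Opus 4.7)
The plan is to verify the claimed matrix identity by expanding both sides in the monomial basis $x^{k-1},x^{k-2},\dots,x,1$ of polynomials of degree at most $k-1$, and checking row by row that the two sides encode the same polynomials $p(S_1),\dots,p(S_k)$.

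The first step is the key algebraic observation. Because $\xi\in\bigcap_{i=1}^{k-1}S_i$, the linear factor $(x-x_\xi)$ divides $p(S_i)$ for every $i\in[k-1]$, giving
$$p(S_i)=(x-x_\xi)\,p(S_i\setminus\{\xi\}),$$
where $p(S_i\setminus\{\xi\})$ has degree at most $k-2$. This is the only place where the intersection hypothesis enters the argument.

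Next, I would interpret left multiplication by $C(Q)$ in terms of polynomials: for any row vector $(\alpha_1,\dots,\alpha_k)$, the product $(\alpha_1,\dots,\alpha_k)\cdot C(Q)$ is the coefficient vector (in the monomial basis above) of $\sum_{l=1}^{k}\alpha_l q_l$, where $q_l$ is the $l$-th element of $Q$ in the order given. Denote the block matrix on the right-hand side by $M$. For each $i\in[k-1]$, the $i$-th row of $M$ is $(C(P')_{i,1},\dots,C(P')_{i,k-1},0)$, which records the coefficients of $p(S_i\setminus\{\xi\})$ in the basis $x^{k-2},\dots,x,1$. The corresponding row of $M\cdot C(Q)$ therefore encodes the polynomial
$$\sum_{l=1}^{k-1}C(P')_{i,l}\,x^{k-1-l}(x-x_\xi)=(x-x_\xi)\sum_{l=1}^{k-1}C(P')_{i,l}\,x^{k-1-l}=(x-x_\xi)\,p(S_i\setminus\{\xi\})=p(S_i),$$
agreeing with the $i$-th row of $C(P)$. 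For $i=k$, the $k$-th row of $M$ is $(0,\dots,0,1)$, so the $k$-th row of $M\cdot C(Q)$ equals the $k$-th row of $C(Q)$, which encodes $p(S_k)$, again matching $C(P)$.

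There is no serious obstacle here; the content of the lemma is the single factorization $(x-x_\xi)\,p(S_i\setminus\{\xi\})=p(S_i)$. The only care required is bookkeeping: the matrix $C(P')$ is $(k-1)\times(k-1)$ and indexes polynomials of degree at most $k-2$, whereas $C(P)$ and $C(Q)$ are $k\times k$ and index polynomials of degree at most $k-1$. One must check that the chosen ordering of $Q$ aligns the $l$-th basis element $x^{k-1-l}$ used by $C(P')$ with the polynomial $x^{k-1-l}(x-x_\xi)$, so that the product telescopes correctly. Once that identification is in place, the computation above is immediate.
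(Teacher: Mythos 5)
Your proposal is correct and follows essentially the same route as the paper: the whole content is the factorisation $p(S_i)=(x-x_\xi)\,p(S_i\setminus\{\xi\})$ for $i\in[k-1]$, translated row by row into the statement that each row of the block matrix gives the coefficients expressing $p(S_i)$ as a linear combination of the polynomials in $Q$, with the last row handling $p(S_k)$ trivially. Your version just makes the bookkeeping (interpreting left multiplication by $C(Q)$ as forming linear combinations of the polynomials in $Q$) more explicit than the paper does.
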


\begin{proof}
For $i\in [k-1]$, let us write
$$
p(S_i\backslash\{\xi\})=\frac{p(S_i)}{x-x_{\xi}}=c_{i,1}x^{k-2}+\dots+c_{i,k-2}x+c_{i,k-1}.
$$
Therefore,
$$
p(S_i)=c_{i,1}\cdot x^{k-2}(x-x_{\xi})+\dots+c_{i,k-1}\cdot(x-x_{\xi})+0\cdot p(S_k).    
$$
The last row of the equation for $C(P)$ follows since 
$$
p(S_k)=0\cdot x^{k-2}(x-x_{\xi})+\dots+0\cdot (x-x_{\xi})+1\cdot p(S_k). \qedhere
$$
\end{proof}

\begin{remark}
The factorisation also generalises to the cases when there are $\lambda$ polynomials that have $k-\lambda$ common roots (counting multiplicity).
\end{remark}

The following lemma yields a useful expression for the determinant of $C(Q)$.

\begin{lem}\label{lem:circulant_det}
Let $\xi\in [n]$ and let $S$ be a multiset where $|S|\leqslant k-1$ and each element of $S$ is from the set $[n]$.
Let $Q=\{x^{k-2}(x-x_{\xi}),x^{k-3}(x-x_{\xi}),\dots,x(x-x_{\xi}),x-x_{\xi},p(S)\}$.
Then
$$
\det\left(C(Q)\right)=\prod_{i\in S} (x_{\xi}-x_i).
$$
In particular, $\det\left(C(Q)\right)$ is nonzero in $\mathbb{F}_q(x_1,\dots,x_n)$ if and only if $\xi\notin S$.
\end{lem}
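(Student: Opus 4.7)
The plan is to compute $\det(C(Q))$ directly by writing out the matrix and then performing a single column operation that isolates the desired product in one entry. First I would note that the polynomial $x^{k-1-i}(x-x_\xi) = x^{k-i} - x_\xi x^{k-1-i}$ contributes a row of $C(Q)$ with a $1$ in column $i$, a $-x_\xi$ in column $i+1$, and zeros elsewhere, for each $i \in [k-1]$. Writing $p(S)(x) = a_1 x^{k-1} + a_2 x^{k-2} + \cdots + a_k$, the matrix $C(Q)$ therefore consists of a bidiagonal $(k-1)\times k$ block on top of the single row $(a_1, a_2, \dots, a_k)$.

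Next, I would apply the column operation
$$
C_k \longmapsto C_k + x_\xi\, C_{k-1} + x_\xi^2\, C_{k-2} + \cdots + x_\xi^{k-1}\, C_1,
$$
which preserves the determinant. In row $i \in [k-1]$ the only nonzero entries of the original matrix are the $1$ in column $i$ and the $-x_\xi$ in column $i+1$, so the new $(i,k)$-entry equals $1\cdot x_\xi^{k-i} + (-x_\xi)\cdot x_\xi^{k-i-1} = 0$. In the bottom row the new $(k,k)$-entry is
$$
\sum_{j=1}^{k} a_j x_\xi^{k-j} = p(S)(x_\xi) = \prod_{i\in S}(x_\xi - x_i).
$$

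The resulting matrix has its last column equal to $(0,\dots,0, p(S)(x_\xi))^\top$, so Laplace expansion along this column gives $\det(C(Q)) = p(S)(x_\xi) \cdot \det(T)$ where $T$ is the upper-triangular $(k-1)\times(k-1)$ leading minor with $1$'s on the diagonal. Hence $\det(T) = 1$ and we obtain $\det(C(Q)) = \prod_{i\in S}(x_\xi - x_i)$, as required.

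For the ``in particular'' claim, since $x_1,\dots,x_n$ are algebraically independent over $\mathbb{F}_q$, the factor $x_\xi - x_i$ vanishes in $\mathbb{F}_q(x_1,\dots,x_n)$ if and only if $i = \xi$, so the product is nonzero precisely when $\xi \notin S$. There is no serious obstacle here; the only care needed is bookkeeping with the column operation to verify the telescoping cancellation $x_\xi^{k-i} - x_\xi^{k-i} = 0$ in each row of the bidiagonal block.
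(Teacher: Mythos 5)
Your proof is correct, but it takes a genuinely different route from the paper's. The paper argues recursively on the multiset $S$: fixing $j\in S$ and writing $p(S)=(x-x_j)\,p(S\setminus\{j\})$, it performs a row reduction that replaces the last row $p(S)$ by $(x_\xi-x_j)\,p(S\setminus\{j\})$, yielding $\det(C(Q))=(x_\xi-x_j)\det(C(Q'))$, and then peels off the roots of $p(S)$ one at a time. You instead do a single column operation $C_k\mapsto C_k+x_\xi C_{k-1}+\cdots+x_\xi^{k-1}C_1$, exploit the bidiagonal structure of the first $k-1$ rows to get telescoping zeros, and read off the determinant as $p(S)(x_\xi)\cdot\det(T)$ with $T$ unitriangular. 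Your computation is a touch more direct and also slightly more general: it shows $\det(C(Q))=p(S)(x_\xi)$ for \emph{any} polynomial in the last row with degree at most $k-1$, without using that $p(S)$ splits into the linear factors $\prod_{i\in S}(x-x_i)$; the factored form is only invoked at the end to evaluate $p(S)$ at $x_\xi$. The paper's recursive argument, by contrast, is tailored to the factored form but has the virtue of producing the linear factors one at a time, which matches how the lemma is then iterated inside Theorem~\ref{thm:triangular_det}. All the details of your column operation check out, including the boundary row $i=k-1$ and the handling of $\deg p(S)=|S|<k-1$ (leading coefficients simply vanish), and the ``in particular'' claim follows correctly from the algebraic independence of $x_1,\dots,x_n$.
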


\begin{proof}
Fix $j\in S$.
Suppose we write $p(S\backslash \{j\})=c_1x^{k-2}+\dots+c_{k-2}x+c_{k-1}$.
Note that $p(S)=(x-x_j)\cdot p(S\backslash \{j\})$.
Hence
\begin{align*}
    &p(S)-\left(c_1x^{k-2}(x-x_{\xi}) + \dots + c_{k-2}x(x-x_{\xi}) + c_{k-1}(x-x_{\xi})\right) \\
    &= p(S)-(x-x_{\xi})(c_1x^{k-2}+\dots+c_{k-2}x+c_{k-1}) \\
    &= (x-x_j)\cdot p(S\backslash\{j\})-(x-x_{\xi})\cdot p(S\backslash\{j\}) \\
    &= (x_{\xi}-x_j)\cdot p(S\backslash\{j\}).
\end{align*}
It follows that
$$
\det\left(C(Q)\right)=(x_{\xi}-x_j)\cdot \det\left(C(Q')\right),
$$
where $Q'=\{x^{k-2}(x-x_{\xi}),x^{k-3}(x-x_{\xi}),\dots,x(x-x_{\xi}),x-x_{\xi},p(S\backslash \{j\})\}$.
We keep repeating this process until we obtain the empty set from $S$ and we obtain
$$
\det\left(C(Q)\right)=\prod_{i\in S} (x_{\xi}-x_i). \qedhere
$$
\end{proof}

Combining Lemma~\ref{lem:factorisation} and Lemma~\ref{lem:circulant_det}, we have Theorem~\ref{thm:triangular_det} below.

\begin{theorem}\label{thm:triangular_det}
Let $\mathcal{S}=\{S_1,S_2,\dots,S_k\}$ be a multiset system such that for each $i\in [k]$, we have $|S_i|\leqslant k-1$ and all elements of $S_i$ are from the set $[n]$.
Let $P=\{p(S_1),p(S_2),\dots,p(S_k)\}\subseteq \mathbb{F}_q(x_1,\dots,x_n)[x]$.
Suppose that, for all $i\in [k]$, we have $\left|\bigcap_{j=1}^{i} S_j\right| \geqslant k-i$.
Suppose also that there exist $\xi_1,\dots,\xi_{k-1}\in [n]$ (not necessarily distinct) such that, for all $i\in [k]$, the multiset $T_{i}=\{\xi_1,\dots,\xi_{k-i}\}$ is contained in the intersection $\bigcap_{j=1}^i S_j$.
Then
$$
\det\left(C(P)\right)=\prod_{i=2}^k \prod_{j\in S_i\backslash T_i} (x_{\xi_{k-i+1}}-x_j).
$$
\end{theorem}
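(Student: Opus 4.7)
The plan is to induct on $k$, using Lemma~\ref{lem:factorisation} and Lemma~\ref{lem:circulant_det} as the engine of the inductive step. The base case $k=1$ is immediate: the hypotheses force $S_1=\emptyset$, so $p(S_1)=1$, $\det C(P)=1$, and the right-hand side is an empty product.

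For the inductive step, I would first extract from the hypothesis that $\xi_1$ lies in $\bigcap_{j=1}^{k-1} S_j$ (by taking $i=k-1$, since $T_{k-1}=\{\xi_1\}$). This is exactly the setup needed to apply Lemma~\ref{lem:factorisation} with $\xi=\xi_1$, giving the block factorisation
\[
C(P)=\begin{pmatrix} C(P') & \mathbf{0}_{k-1}^{\top} \\ \mathbf{0}_{k-1} & 1 \end{pmatrix}\cdot C(Q),
\]
where $P'=\{p(S_i\setminus\{\xi_1\})\}_{i=1}^{k-1}$ and $Q=\{x^{k-2}(x-x_{\xi_1}),\dots,x-x_{\xi_1},p(S_k)\}$. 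Taking determinants yields $\det C(P)=\det C(P')\cdot\det C(Q)$, and Lemma~\ref{lem:circulant_det} evaluates the second factor as $\prod_{j\in S_k}(x_{\xi_1}-x_j)$. Since $T_k=\emptyset$ and $\xi_{k-k+1}=\xi_1$, this is exactly the $i=k$ term of the claimed product.

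Next I would verify that the reduced system $\mathcal{S}'=\{S_1\setminus\{\xi_1\},\dots,S_{k-1}\setminus\{\xi_1\}\}$ with the shifted sequence $\xi'_m:=\xi_{m+1}$ satisfies the hypotheses of the theorem at size $k-1$. The cardinality bound $|S_i'|\leqslant k-2$ is immediate. For the intersection condition, for each $i\in[k-1]$ we have $\bigcap_{j=1}^{i}(S_j\setminus\{\xi_1\})=\bigl(\bigcap_{j=1}^i S_j\bigr)\setminus\{\xi_1\}$ as multisets, so removing one copy of $\xi_1$ from both $T_i=\{\xi_1,\xi_2,\dots,\xi_{k-i}\}$ and the intersection preserves containment $T'_i=\{\xi_2,\dots,\xi_{k-i}\}\subseteq\bigcap_{j=1}^i S_j'$ while decreasing sizes by one. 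Applying the inductive hypothesis to $\mathcal{S}'$ gives
\[
\det C(P')=\prod_{i=2}^{k-1}\prod_{j\in S_i'\setminus T_i'}(x_{\xi'_{(k-1)-i+1}}-x_j)=\prod_{i=2}^{k-1}\prod_{j\in S_i\setminus T_i}(x_{\xi_{k-i+1}}-x_j),
\]
where the reindexing uses $\xi'_{k-i}=\xi_{k-i+1}$ and $S_i'\setminus T_i'=S_i\setminus T_i$ (as multisets, the single extra $\xi_1$ cancels). Multiplying by the $\det C(Q)$ computed above folds the $i=k$ factor into the product and completes the induction.

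The only place requiring real care is the bookkeeping in the inductive step: the relabeling $\xi'_m=\xi_{m+1}$ must line up with the indexing $\xi_{k-i+1}$ in the formula, and the multiset subtractions must be tracked honestly (since the hypotheses are stated for multisets). Neither step is deep, but a careless treatment of multiplicities in the intersections is the most plausible source of error, so I would be explicit that all set operations are performed in the multiset sense and check the index shift once carefully.
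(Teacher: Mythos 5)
Your proof is correct and is essentially the paper's argument made explicit: the paper likewise proves the theorem by repeatedly applying Lemma~\ref{lem:factorisation} (peeling off one common root $\xi_1$ of $S_1,\dots,S_{k-1}$ at each stage) and evaluating each resulting factor $\det C(Q)$ via Lemma~\ref{lem:circulant_det}. Your version simply packages this repeated factorisation as an induction on $k$ with the multiset and index bookkeeping carried out in detail, all of which checks out.
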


\begin{proof}
Note that $\xi_i\in \bigcap_{j=1}^{k-i} S_j$ for all $i\in [k-1]$.
Using Lemma~\ref{lem:factorisation}, we repeatedly factorise $\det\left(C(P)\right)$ and Lemma~\ref{lem:circulant_det} gives us the formula for each factor.
\end{proof}


We will utilise the construction of Reed-Solomon codes for our next results (see~\cite{halbawi2016balanced}).
A Reed-Solomon code of length $n$ and dimension $k$ over finite field $\mathbb{F}_q$ is the $k$-dimensional subspace of $\mathbb{F}_q^n$ given by ${\mathcal{C}}_{\text{RS}}=\{(p(a_1),\dots,p(a_n)):\text{deg}(p(x))<k\}$, where $p(x)$ are polynomials over $\mathbb{F}_q$ with degree less than $k$ and the evaluation points $a_1,\dots,a_n \in \mathbb{F}_q$ are all distinct.
The codeword associated with $p(x)$ is $(p(a_1),\dots,p(a_n))$.
Since a Reed-Solomon code is an MDS code, any of its generator matrices is an MDS matrix.

Let $p_1,\dots,p_k$ be polynomials over $\mathbb{F}_q$ with degree less than $k$.
Given an $n$-subset $\{a_1,\dots,a_n\}$ of $\mathbb{F}_q$, define the matrix $A=A(\{a_1,\dots,a_n\})$ by $A_{i,j}=p_i(a_j)$ for all $i \in [k]$ and $j \in [n]$.
If $p_1,\dots,p_k$ are linearly independent over $\mathbb F_q$ then, for any $n$-subset $\{a_1,\dots,a_n\}$ of $\mathbb{F}_q$, the matrix $A = A(\{a_1,\dots,a_n\})$ is a generator matrix of a Reed-Solomon code.
In particular $A$ is an MDS matrix. 
Using Lemma~\ref{lem:linindpols}, we have that if the determinant of $C(\{p_1,\dots,p_k\})$ is nonzero in $\mathbb{F}_q$ then, for any $n$-subset $\{a_1,\dots,a_n\}$ of $\mathbb{F}_q$, the matrix $A(\{a_1,\dots,a_n\})$ is an MDS matrix.

Now we can state our first main result.

\begin{theorem}\label{thm:main1}
Let $\mathcal{S}=\{S_1,\dots,S_k\}$ be a set system where $S_i\subseteq [n]$ for all $i\in [k]$.
Suppose $\mathcal{S}$ satisfies the MDS condition and, for all $i\in [k]$, we have $\left|\bigcap_{j=1}^{i} S_j\right|=k-i$.
Then for any finite field $\mathbb{F}_q$ with $q \geqslant n$, there exists a $k\times n$ MDS matrix $A$ over $\mathbb{F}_q$ such that $A_{i,j}=0$ if and only if $j\in S_i$.
\end{theorem}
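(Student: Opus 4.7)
The plan is to invoke the Reed-Solomon framework developed just before the theorem. I set $p_i := p(S_i) = \prod_{\ell \in S_i}(x - x_\ell) \in \mathbb{F}_q(x_1,\dots,x_n)[x]$, specialize $x_1,\dots,x_n$ to any $n$ distinct elements of $\mathbb{F}_q$ (possible since $q \geqslant n$), and define $A_{i,j} := p_i(x_j)$. Each $p_i$ has degree $|S_i| \leqslant k-1$, so by the remarks preceding the theorem the task reduces to showing that after the substitution the polynomials $p_1,\dots,p_k$ remain linearly independent over $\mathbb{F}_q$; by Lemma~\ref{lem:linindpols} this is equivalent to checking that $\det C(P) \neq 0$ after the specialization.

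To apply Theorem~\ref{thm:triangular_det}, I would write $I_i := \bigcap_{j=1}^{i} S_j$. By hypothesis $|I_i| = k - i$, so the descending chain $I_1 \supseteq I_2 \supseteq \cdots \supseteq I_k = \emptyset$ drops in cardinality by exactly one at each step; for each $i \in [k-1]$ let $\eta_i$ be the unique element of $I_i \setminus I_{i+1}$, so that $I_i = \{\eta_i, \eta_{i+1}, \dots, \eta_{k-1}\}$. Setting $\xi_m := \eta_{k-m}$ for $m \in [k-1]$ gives $\{\xi_1,\dots,\xi_{k-i}\} = I_i \subseteq \bigcap_{j=1}^i S_j$, so the hypotheses of Theorem~\ref{thm:triangular_det} are satisfied and
$$
\det C(P) = \prod_{i=2}^{k}\prod_{j \in S_i \setminus I_i}\bigl(x_{\xi_{k-i+1}} - x_j\bigr).
$$

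It remains to check that this product is nonzero after the specialization. The key observation is that $\xi_{k-i+1} = \eta_{i-1} \in I_{i-1}$ but $\eta_{i-1} \notin I_i = I_{i-1} \cap S_i$, forcing $\eta_{i-1} \notin S_i$. Hence for every $j \in S_i \setminus I_i \subseteq S_i$ we have $j \neq \xi_{k-i+1}$, and distinctness of the chosen $x_\ell$'s then guarantees $x_{\xi_{k-i+1}} - x_j \neq 0$. Linear independence of the $p_i$'s over $\mathbb{F}_q$ follows, so $A$ is a generator matrix of a Reed-Solomon code and is therefore MDS. The required zero pattern is immediate: $A_{i,j} = \prod_{\ell \in S_i}(x_j - x_\ell)$ vanishes iff $x_j = x_\ell$ for some $\ell \in S_i$, iff $j \in S_i$.

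I expect the combinatorial bookkeeping in the middle paragraph to be the only real obstacle: fixing the correct ordering $\xi_m = \eta_{k-m}$ of the chain of intersections so that Theorem~\ref{thm:triangular_det} applies, and then tracing the membership relations carefully enough to rule out vanishing factors in the product formula. Everything else is a direct application of Lemma~\ref{lem:linindpols}, Lemma~\ref{lem:circulant_det}, and Theorem~\ref{thm:triangular_det}.
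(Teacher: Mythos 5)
Your proposal is correct and follows essentially the same route as the paper: reduce to showing $\det C(P)\neq 0$ after specialization, apply Theorem~\ref{thm:triangular_det} to the chain of intersections, and observe that $\xi_{k-i+1}\notin S_i$ so every linear factor survives the substitution of distinct field elements. The only cosmetic difference is that the paper relabels coordinates so that $\bigcap_{j=1}^{i}S_j=[k-i]$ (making $\xi_m=m$), whereas you track the elements $\eta_i$ of the descending chain explicitly.
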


\begin{proof}
Let $\mathbb{F}_q$ be a finite field with $q\geqslant n$.
Without loss of generality, assume that $\bigcap_{j=1}^{i} S_j = [k-i]$ for all $i\in [k]$.
Let $P=\{p(S_1),p(S_2),\dots,p(S_k)\}$.
By Theorem~\ref{thm:triangular_det}, we have
$$
\det\left(C(P)\right)=\prod_{i=2}^k \prod_{j\in S_i\backslash [k-i]} (x_{k-i+1}-x_j).
$$
For $2\leqslant i\leqslant k$, we have $k-i+1\notin S_i\backslash [k-i]$ since $\mathcal{S}$ satisfies the MDS condition.
Note that the value of $\det\left(C(P)\right)$ will be nonzero in $\mathbb{F}_q$ as long as we substitute distinct elements of $\mathbb{F}_q$ for $x_1,x_2,\dots,x_n$.

Fix a subset $\{a_1,a_2,\dots,a_n\} \subseteq \mathbb{F}_q$.
For all $i\in [n]$, we set $x_i=a_i$.
Under this substitution 
we obtain a new set of polynomials $R=\{r_1,r_2,\dots,r_k\}\subseteq \mathbb{F}_q[x]$ where $r_i(x)=\prod_{\lambda\in S_i}(x-a_{\lambda})$ for each $i\in [k]$.
Hence $\det\left(C(R)\right)$ is nonzero in $\mathbb{F}_q$ and therefore $R$ is linearly independent over $\mathbb{F}_q$.

Now we can construct a $k\times n$ MDS matrix $A$ where $A_{i,j}=r_i(a_j)$ for all $i\in[k]$ and $j\in [n]$.
Let $i\in[k]$ and let $j\in [n]$.
If $j\in S_i$ then obviously $A_{i,j}=r_i(a_j)=0$.
Suppose $j\notin S_i$.
Then $A_{i,j}=r_i(a_j)=\prod_{\lambda\in S_i}(a_j-a_{\lambda})$ is nonzero.
Therefore, we also have that $A_{i,j}=0$ if and only if $j\in S_i$.
\end{proof}

\begin{exa}
Let $n=7$ and let $\mathcal{S}=\{S_1,S_2,S_3,S_4\}$ where $S_1=\{1,2,3\}$, $S_2=\{1,2,6\}$, $S_3=\{1,5,7\}$, and $S_4=\{3,4,5\}$.
Note that $|S_1|=3$, $|S_1\cap S_2|=2$, $|S_1\cap S_2\cap S_3|=1$, and $|S_1\cap S_2\cap S_3\cap S_4|=0$.
For $q=n=7$, we will construct a $4\times 7$ MDS matrix $A$ over $\mathbb{F}_7$ such that $A_{i,j}=0$ if and only if $j\in S_i$.
Let $P=\{p_1,p_2,p_3,p_4\}$ where
\begin{align*}
p_1(x) &= (x-x_1)(x-x_2)(x-x_3) \\
p_2(x) &= (x-x_1)(x-x_2)(x-x_6) \\
p_3(x) &= (x-x_1)(x-x_5)(x-x_7) \\
p_4(x) &= (x-x_3)(x-x_4)(x-x_5).
\end{align*}
We have
\begin{align*}
\det\left(C(P)\right)&=\det\begin{pmatrix}
1 & -(x_1+x_2+x_3) & x_1x_2+x_1x_3+x_2x_3 & -x_1x_2x_3 \\
1 & -(x_1+x_2+x_6) & x_1x_2+x_1x_6+x_2x_6 & -x_1x_2x_6 \\
1 & -(x_1+x_5+x_7) & x_1x_5+x_1x_7+x_5x_7 & -x_1x_5x_7 \\
1 & -(x_3+x_4+x_5) & x_3x_4+x_3x_5+x_4x_5 & -x_3x_4x_5
\end{pmatrix}\\
&= (x_1-x_3)(x_1-x_4)(x_1-x_5)(x_2-x_5)(x_2-x_7)(x_3-x_6).
\end{align*}

Note that the value of $\det\left(C(P)\right)$ will be nonzero in $\mathbb{F}_7$ as long as we substitute distinct elements of $\mathbb{F}_7$ for $x_1,x_2,\dots,x_7$.
Let $\mathbb{F}_q=\mathbb{Z}_7 = \{a_1,\dots,a_7\}$
where $a_1=1$, $a_2=0$, $a_3=3$, $a_4=6$, $a_5=5$, $a_6=2$, $a_7=4$.
For all $i\in [7]$, we set $x_i=a_i$.
Under this substitution, we have a new set of polynomials $R=\{r_1,r_2,r_3,r_4\}$ where $r_1(x)=(x-1)x(x-3)$, $r_2(x)=(x-1)x(x-2)$, $r_3(x)=(x-1)(x-5)(x-4)$, and $r_4(x)=(x-3)(x-6)(x-5)$.
The determinant of $C(R)$ is nonzero in $\mathbb{Z}_7$ and our $4\times 7$ MDS matrix $A$ over $\mathbb{Z}_7$ is
$$
A=
\begin{pmatrix}
0 & 0 & 0 & 6 & 5 & 5 & 5 \\
0 & 0 & 6 & 1 & 4 & 0 & 3 \\
0 & 1 & 4 & 3 & 0 & 6 & 0 \\
2 & 1 & 0 & 0 & 0 & 2 & 2 \\
\end{pmatrix}.
$$
We remark that the top left hand of $A$ is a triangle of zeros; this is the structure imposed by the assumption of Theorem~\ref{thm:main1}.
\end{exa}

Using ideas similar to those used in the proof of Theorem~\ref{thm:main1}, we establish our second main result.

\begin{theorem}\label{thm:main2}
Let $\mathcal{S}=\{S_1,\dots,S_k\}$ be a set system where $S_i\subseteq [n]$ for all $i\in [k]$.
Suppose for all $i\in [k]$, we have $|S_i|\leqslant i-1$.
Then for any finite field $\mathbb{F}_q$ with $q\geqslant n+1$, there exists a $k\times n$ MDS matrix $A$ over $\mathbb{F}_q$ such that $A_{i,j}=0$ if and only if $j\in S_i$.
\end{theorem}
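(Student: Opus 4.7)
The plan is to follow the same strategy as in Theorem~\ref{thm:main1}, but to \emph{pad} each polynomial with a suitable power of $x$ so that it has degree exactly $i-1$. Concretely, for each $i\in[k]$, I would set
$$
p_i(x) \coloneqq x^{i-1-|S_i|}\prod_{j\in S_i}(x-x_j)\in\mathbb{F}_q(x_1,\dots,x_n)[x].
$$
Since $|S_i|\le i-1$, this is a well-defined monic polynomial of degree exactly $i-1\le k-1$.

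The next step is to observe that the coefficient matrix $C(P)$ is anti-triangular with unit antidiagonal. Because the coefficient of $x^m$ in $p_i$ is zero for $m\ge i$, we have $c_{i,j}=0$ whenever $j\le k-i$, while $c_{i,k-i+1}$ equals the leading coefficient $1$. Reversing the ordering of the rows turns $C(P)$ into a lower-triangular matrix with $1$'s on the diagonal, so $\det C(P) = \pm 1$, independently of the $x_i$.

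To finish, I would select $n$ distinct \emph{nonzero} elements $a_1,\dots,a_n\in\mathbb{F}_q$, which is possible because $q\ge n+1$, and substitute $x_i=a_i$ to obtain polynomials $r_i(x)\in\mathbb{F}_q[x]$ whose coefficient matrix $C(R)$ still has determinant $\pm 1$. Lemma~\ref{lem:linindpols} then gives linear independence of the $r_i$ over $\mathbb{F}_q$, so the matrix $A$ with $A_{i,j}=r_i(a_j)$ is a generator matrix of a Reed-Solomon code, hence MDS. Since
$$
A_{i,j}=a_j^{i-1-|S_i|}\prod_{l\in S_i}(a_j-a_l),
$$
the fact that $a_j\ne 0$ and that the $a_j$ are distinct forces $A_{i,j}=0$ if and only if $j\in S_i$.

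The main idea, and essentially the only non-routine step, is the choice of the padding exponent $i-1-|S_i|$. Without padding, whenever $|S_i|<i-1$ the coefficient matrix would fail to have nonzero antidiagonal entries and one would need the stronger intersection hypothesis used in Theorem~\ref{thm:main1}. The cost of padding is having to avoid $0$ among the evaluation points, which accounts for the jump from $q\ge n$ in Theorem~\ref{thm:main1} to $q\ge n+1$ here.
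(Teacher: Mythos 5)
Your proof is correct, but it takes a different route from the paper's. The paper also pads, but with powers of a \emph{fresh} linear factor: it sets $U_i=S_i\cup\{n+1,\dots,n+1\}$ ($k-i$ copies), so that $p(U_i)=(x-x_{n+1})^{k-i}\prod_{\lambda\in S_i}(x-x_\lambda)$, and then the common-root structure $\bigcap_{j\le i}U_j\supseteq\{n+1,\dots,n+1\}$ lets it invoke Theorem~\ref{thm:triangular_det} to get $\det C(P)=\prod_{i=2}^k\prod_{j\in S_i}(x_{n+1}-x_j)$, which is nonzero once $x_1,\dots,x_{n+1}$ are assigned $n+1$ distinct field elements. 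You instead pad with $x^{i-1-|S_i|}$ so that $\deg p_i=i-1$ exactly, which makes $C(P)$ anti-triangular with unit antidiagonal and gives $\det C(P)=\pm1$ with no machinery at all --- in particular you bypass Lemmas~\ref{lem:factorisation} and~\ref{lem:circulant_det} and Theorem~\ref{thm:triangular_det} entirely, and your determinant is nonvanishing under \emph{any} substitution, not just distinct ones. The trade-off is symmetric: the paper spends its $(n+1)$-st field element on the auxiliary evaluation point $a_{n+1}$, while you spend it on excluding $0$ from the evaluation set so that the factor $a_j^{i-1-|S_i|}$ never kills an entry with $j\notin S_i$; both uses account for $q\geqslant n+1$. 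One small slip: after reversing the rows, your matrix is upper-triangular (row $i$ has its leading $1$ in column $i$ under the paper's convention $c_{i,j}=[x^{k-j}]p_i$), not lower-triangular; this does not affect the conclusion $\det C(P)=\pm1$.
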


\begin{proof}
Let $\mathbb{F}_q$ be a finite field with $q\geqslant n+1$.
Observe that $|S_i|\leqslant i-1$ for all $i\in [k]$ implies that $\mathcal{S}$ satisfies the MDS condition.
For each $i\in [k]$, define the multiset $U_i=S_i\cup \{\underbrace{n+1,\dots,n+1}_{k-i}\}$ so $|U_i|\leqslant k-1$.
Let $P=\{p(U_1),p(U_2),\dots,p(U_k)\}$.
Note that $\bigcap_{j=1}^i U_j=\{\underbrace{n+1,\dots,n+1}_{k-i}\}$ for all $i\in [k]$.
By Theorem~\ref{thm:triangular_det}, we have
$$
\det\left(C(P)\right)=\prod_{i=2}^k\prod_{j\in S_i}(x_{n+1}-x_j).
$$
For $2\leqslant i\leqslant k$ and $j\in S_i$, we clearly have $j\neq n+1$.
Note that the value of $\det\left(C(P)\right)$ will be nonzero in $\mathbb{F}_q$ as long as we substitute distinct elements of $\mathbb{F}_q$ for $x_1,x_2,\dots,x_{n+1}$.

Fix a subset $\{a_1,a_2,\dots,a_{n+1}\} \subseteq \mathbb{F}_q$.
For all $i\in [n+1]$, we set $x_i=a_i$.
Under this substitution 
we obtain a new set of polynomials $R=\{r_1,r_2,\dots,r_k\}\subseteq \mathbb{F}_q[x]$ where $r_i(x)=(x-a_{n+1})^{k-i}\prod_{\lambda\in S_i}(x-a_{\lambda})$ for each $i\in [k]$.
Hence $\det\left(C(R)\right)$ is nonzero in $\mathbb{F}_q$ and therefore $R$ is linearly independent over $\mathbb{F}_q$.

Now we can construct a $k\times n$ MDS matrix $A$ where $A_{i,j}=r_i(a_j)$ for all $i\in[k]$ and $j\in [n]$.
Let $i\in[k]$ and let $j\in [n]$.
If $j\in S_i$ then obviously $A_{i,j}=r_i(a_j)=0$.
Suppose $j\notin S_i$.
Thus $A_{i,j}=r_i(a_j)=(a_j-a_{n+1})^{k-i}\prod_{\lambda\in S_i}(a_j-a_{\lambda})$ is nonzero.
Therefore, we also have that $A_{i,j}=0$ if and only if $j\in S_i$.
\end{proof}

\begin{exa}
Let $n=6$ and let $\mathcal{S}=\{S_1,S_2,S_3,S_4\}$ where $S_1=\emptyset$, $S_2=\{3\}$, $S_3=\{2,5\}$, and $S_4=\{1,4,6\}$.
Note that $|S_1|=0$, $|S_2|\leqslant 1$, $|S_3|\leqslant 2$, and $|S_4|\leqslant 3$.
For $q=n+1=7$, we will construct a $4\times 6$ MDS matrix $A$ over $\mathbb{F}_7$ such that $A_{i,j}=0$ if and only if $j\in S_i$.

Let $U_1=\{7,7,7\}$, $U_2=\{3,7,7\}$, $U_3=\{2,5,7\}$, and $U_4=\{1,4,6\}$.
Let $P=\{p_1,p_2,p_3,p_4\}$ where
\begin{align*}
p_1(x) &= (x-x_7)^3 \\
p_2(x) &= (x-x_3)(x-x_7)^2 \\
p_3(x) &= (x-x_2)(x-x_5)(x-x_7)\\
p_4(x) &= (x-x_1)(x-x_4)(x-x_6).
\end{align*}
We have
\begin{align*}
\det\left(C(P)\right)&=\det\begin{pmatrix}
1 & -3x_7 & 3{x_7}^2 & -{x_7}^3 \\
1 & -(x_3+2x_7) & {x_7}^2+2x_3x_7 & -x_3{x_7}^2 \\
1 & -(x_2+x_5+x_7) & x_2x_5+x_2x_7+x_5x_7 & -x_2x_5x_7 \\
1 & -(x_1+x_4+x_6) & x_1x_4+x_1x_6+x_4x_6 & -x_1x_4x_6
\end{pmatrix}\\
&= (x_7-x_1)(x_7-x_2)(x_7-x_3)(x_7-x_4)(x_7-x_5)(x_7-x_6).
\end{align*}

Note that the value of $\det\left(C(P)\right)$ will be nonzero in $\mathbb{F}_7$ as long as we substitute distinct elements of $\mathbb{F}_7$ for $x_1,x_2,\dots,x_7$.
Let $\mathbb{F}_q=\mathbb{Z}_7=\{a_1,a_2,\dots,a_7\}$ where $a_1=6$, $a_2=4$, $a_3=0$, $a_4=3$, $a_5=2$, $a_6=5$, $a_7=1$.
For all $i\in [7]$, we set $x_i=a_i$.
Under this substitution, we have a new set of polynomials $R=\{r_1,r_2,r_3,r_4\}$ where $r_1(x)=(x-1)^3$, $r_2(x)=x(x-1)^2$, $r_3(x)=(x-4)(x-2)(x-1)$, $r_4(x)=(x-6)(x-3)(x-5)$.
The determinant of $C(R)$ is nonzero in $\mathbb{Z}_7$ and our $4\times 7$ MDS matrix $A$ over $\mathbb{Z}_7$ is
$$
A=
\begin{pmatrix}
 6 & 6 & 6 & 1 & 1 & 1 \\
 3 & 1 & 0 & 5 & 2 & 3 \\
 5 & 0 & 6 & 5 & 0 & 5 \\
 0 & 2 & 1 & 0 & 2 & 0 \\
\end{pmatrix}.
$$
We remark that the $i$-th row of $A$ has at most $i-1$ zeros for $i \in \{1,\dots,4\}$; this is the structure imposed by the assumption of Theorem~\ref{thm:main2}.
\end{exa}

To complete this section, we consider an example that motivates a search for possible extensions of Theorem~\ref{thm:main1} and Theorem~\ref{thm:main2}.

\begin{exa}
Let $n=7$ and let $\mathcal{S}=\{S_1,S_2,S_3,S_4\}$ where $S_1=\{1,5,6\}$, $S_2=\{1,3,5\}$, $S_3=\{2,6,7\}$, and $S_4=\{2,4,7\}$.
Let $P=\{p_1,p_2,p_3,p_4\}$ where
\begin{align*}
p_1(x) &= (x-x_1)(x-x_5)(x-x_6) \\
p_2(x) &= (x-x_1)(x-x_3)(x-x_5) \\
p_3(x) &= (x-x_2)(x-x_6)(x-x_7) \\
p_4(x) &= (x-x_2)(x-x_4)(x-x_7).
\end{align*}
Here $\mathcal{S}$ satisfies the MDS condition but it does not satisfy the assumptions of Theorem~\ref{thm:main1} or Theorem~\ref{thm:main2}.
However, the determinant of $C(P)$ splits into linear factors just like in the previous two examples.
Indeed, we have
\begin{align*}
\det\left(C(P)\right)&=\det\begin{pmatrix}
1 & -(x_1+x_5+x_6) & x_1x_5+x_1x_6+x_5x_6 & -x_1x_5x_6 \\
1 & -(x_1+x_3+x_5) & x_1x_3+x_1x_5+x_3x_5 & -x_1x_3x_5 \\
1 & -(x_2+x_6+x_7) & x_2x_6+x_2x_7+x_6x_7 & -x_2x_6x_7 \\
1 & -(x_2+x_4+x_7) & x_2x_4+x_2x_7+x_4x_7 & -x_2x_4x_7
\end{pmatrix}\\
&= (x_1-x_2)(x_1-x_7)(x_2-x_5)(x_3-x_6)(x_5-x_7)(x_6-x_4).
\end{align*}

Note that the value of $\det\left(C(P)\right)$ will be nonzero in $\mathbb{F}_q$ as long as we substitute distinct elements of $\mathbb{F}_q$ for $x_1,x_2,\dots,x_7$.
Consequently, for any finite field $\mathbb{F}_q$ with $q\geqslant 7$, we can construct a $4\times 7$ MDS matrix $A$ over $\mathbb{F}_q$ such that $A_{i,j}=0$ if and only if $j\in S_i$.
As an example, we will construct such MDS matrix $A$ for $q=7$.
Let $\mathbb{F}_q=\mathbb{Z}_7 = \{a_1,\dots,a_7\}$
where $a_1=2$, $a_2=5$, $a_3=0$, $a_4=1$, $a_5=4$, $a_6=3$, $a_7=6$.
For all $i\in [7]$, we set $x_i=a_i$.
Under this substitution, we have a new set of polynomials $R=\{r_1,r_2,r_3,r_4\}$ where $r_1(x)=(x-2)(x-4)(x-3)$, $r_2(x)=(x-2)x(x-4)$, $r_3(x)=(x-5)(x-3)(x-6)$, $r_4(x)=(x-5)(x-1)(x-6)$.
The determinant of $C(R)$ is nonzero in $\mathbb{Z}_7$ and our $4\times 7$ MDS matrix $A$ over $\mathbb{Z}_7$ is
$$
A=
\begin{pmatrix}
0 & 6 & 4 & 1 & 0 & 0 & 3 \\
0 & 1 & 0 & 3 & 0 & 4 & 6 \\
2 & 0 & 1 & 2 & 2 & 0 & 0 \\
5 & 0 & 5 & 0 & 6 & 5 & 0 \\
\end{pmatrix}.
$$

Furthermore, the matrix $A$ is sparsest and balanced in the sense discussed in \cite{dau2013balanced,halbawi2016balanced,song2018generalized}.
This example suggests that it would be interesting to study sets of polynomials having the property that the determinant of the coefficient matrix splits into linear factors.
\end{exa}


\section{A generalisation of the GM-MDS Conjecture}\label{sec:generalized_gm_mds}

In this section we introduce the conjecture of Lovett~\cite[Conjecture 1.5]{lovett2018proof}.
Let $\mathbb{F}_q$ be a finite field where $q$ is a prime power and let $\mathbb{N}=\{0,1,2,\dots\}$ be the set of non-negative integers.
Let $\mathbf{v}\in \mathbb{N}^n$ be a vector. 
Denote by $\mathbf{v}(i)$ the $i$-th coordinate of $\mathbf{v}$ and define $|\mathbf{v}| \coloneqq \sum_{i=1}^n \mathbf{v}(i)$. 
For a set of vectors $\mathcal{V}=\{\mathbf{v}_1,\dots,\mathbf{v}_m\}$ and a subset $I\subseteq [m]$, define $\mu_{\mathcal{V}}(I)$ as
$$
\mu_{\mathcal{V}}(I)=(\text{min}_{i\in I}\mathbf{v}_i(1),\dots,\text{min}_{i\in I}\mathbf{v}_i(n)).
$$
Given a parameter $k>|\mathbf{v}|$, define a set of polynomials in $\mathbb{F}_q(x_1,\dots,x_n)[x]$:
$$
P(k,\mathbf{v}) :=\left\{ \prod_{j\in [n]}(x-x_j)^{\mathbf{v}(j)}x^e : e=0,\dots,k-1-|\mathbf{v}| \right\}.
$$
For a set of vectors $\mathcal{V}=\{\mathbf{v}_1,\dots,\mathbf{v}_m\}\subseteq \mathbb{N}^n$ we define the (multi)set
$$
P(k,\mathcal{V})=P(k,\mathbf{v}_1)\cup\dots\cup P(k,\mathbf{v}_m).
$$
Observe that
$$
|P(k,\mathcal{V})|=|P(k,\mathbf{v}_1)|+\dots+|P(k,\mathbf{v}_m)|.
$$
\begin{dfn}[Property $V(k)$ {\cite[Definition 1.4]{lovett2018proof}}]\label{dfn}
Let $k,m,n\geqslant 1$ be integers and let $\mathcal{V}=\{ \mathbf{v}_1,\dots,\mathbf{v}_m \} \subseteq \mathbb{N}^n$. 
We say that $\mathcal{V}$ satisfies $V(k)$ if it satisfies:
\begin{enumerate}[label=(\Roman*)]
\item $|\mathbf{v}_i|\leqslant k-1$ for all $i\in [m]$.
\item For all $I\subseteq [m]$ nonempty, $\sum_{i\in I}(k-|\mathbf{v}_i|)+|\mu_{\mathcal{V}}(I)|\leqslant k$.
\end{enumerate}
\end{dfn}

The definition of property $V_l(k)$ below is a slight modification of Definition 1.6 (Property $V^*(k)$) in \cite{lovett2018proof}.

\begin{dfn}[Property $V_l(k)$]\label{dfnl}
Let $k,m,n\geqslant 1$ and $l\geqslant 0$ be integers where $n\geqslant l$ and let $\mathcal{V}=\{ \mathbf{v}_1,\dots,\mathbf{v}_m \} \subseteq \mathbb{N}^n$. 
We say that $\mathcal{V}$ satisfies $V_l(k)$ if it satisfies $V(k)$, and additionally it satisfies:
\begin{enumerate}[label=(\Roman*)]
\setcounter{enumi}{2}
\item $\mathbf{v}_i \in \left \{ 0,1 \right \}^{n-l} \times \mathbb{N}^l$ for all $i\in [m]$.
\end{enumerate}
\end{dfn}
Note that if $\mathcal{V}$ satisfies $V_l(k)$, then $\mathcal{V}$ satisfies $V_{l'}(k)$ for any $l'$ where $l\leqslant l'\leqslant n$. The remainder of this paper is on the following conjecture of Shachar Lovett.

\begin{conj}[Conjecture 1.5 in {\cite{lovett2018proof}}]\label{conj}
Let $k,m,n\geqslant 1$ be integers and let $\mathcal{V}=\{ \mathbf{v}_1,\dots,\mathbf{v}_m \} \subseteq \mathbb{N}^n$. 
Assume that $\mathcal{V}$ satisfies $V(k)$. 
Then the polynomials in $P(k,\mathcal{V})$ are linearly independent over $\mathbb{F}_q(x_1,\dots,x_n)$.
\end{conj}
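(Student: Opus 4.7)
The plan is to attempt the conjecture by a factorisation strategy that parallels the proof of Theorem~\ref{thm:triangular_det}, inducting on $k$ and peeling off common linear factors one at a time.

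First, I would reduce to the extremal case where $|P(k,\mathcal{V})| = \sum_{i=1}^m (k - |\mathbf{v}_i|) = k$; if the total is strictly less than $k$, one can enlarge the family (e.g.\ by adjoining standard basis vectors) while preserving property $V(k)$, and the linear independence of the augmented polynomial system implies that of the original. Writing the polynomials of $P(k,\mathcal{V})$ as rows of a $k \times k$ coefficient matrix $M$ with respect to the monomial basis $1, x, \ldots, x^{k-1}$, Lemma~\ref{lem:linindpols} reduces the statement to the claim that $\det(M) \neq 0$ in $\mathbb{F}_q(x_1,\ldots,x_n)$.

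Next, I would attempt to select an index $\xi \in [n]$ such that $(x - x_\xi)$ divides every polynomial in a suitably large collection of rows, and mimic Lemma~\ref{lem:factorisation} to write $M = B \cdot C(Q)$, where $C(Q)$ is a circulant-type matrix handled by Lemma~\ref{lem:circulant_det} and $B$ is the coefficient matrix of the reduced polynomial system obtained by lowering the $\xi$-coordinate of the relevant $\mathbf{v}_i$ by $1$ and decreasing $k$ to $k-1$. Iterating this procedure, with condition (II) used at each stage to certify that the required common factor exists and that the reduced family still satisfies $V(k-1)$, would give an explicit factorisation of $\det(M)$ as a product of differences $(x_\xi - x_\ell)$; each such difference is nonzero in $\mathbb{F}_q(x_1,\ldots,x_n)$, yielding the conclusion.

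The main obstacle, and the step that I expect to require the most delicate work, is producing a valid sequence of factorisations. When every $\mathbf{v}_i(j) \in \{0,1\}$, the situation specialises to the setting treated in Theorem~\ref{thm:triangular_det} and the inductive hypothesis propagates cleanly, because removing a single common $(x - x_\xi)$ drops each affected multiplicity from $1$ to $0$ and the combinatorial bookkeeping mirrors the MDS condition of Definition~\ref{mdscond}. With higher multiplicities $\mathbf{v}_i(j) \geq 2$, however, condition (II) is phrased coordinate-wise in terms of the minima $\mu_{\mathcal{V}}(I)$ and is insensitive to how the total multiplicity at a coordinate is distributed between blocks: a block with $\mathbf{v}_i(j^*) = 2$ contributes two algebraic copies of $(x - x_{j^*})$ yet only one to the minimum when paired with a block having $\mathbf{v}_{i'}(j^*) = 1$. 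After one factorisation step the reduced family can therefore fail $V(k-1)$, so the induction hypothesis need not survive. Closing this gap—identifying a canonical ordering of the indices $\xi$ to peel off that preserves a sharpened inductive invariant at every stage—is the crux; any successful proof of the conjecture as stated will hinge on resolving exactly this interaction between the combinatorial minima in condition (II) and the true algebraic multiplicities appearing in the polynomials of $P(k,\mathcal{V})$.
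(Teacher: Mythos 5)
The statement you set out to prove is false, and the difficulty you flag in your last paragraph is not a gap to be closed but the exact point at which the conjecture collapses. The paper's contribution regarding Conjecture~\ref{conj} is a refutation, not a proof. Concretely, take $k=4$, $n=3$, $m=4$ and $\mathcal{V}=\mathcal{W}_2=\{(3,0,0),(0,3,0),(0,0,3),(1,1,1)\}$. This family satisfies $V(4)$ (Proposition~\ref{prop1}): each $|\mathbf{v}_i|=3=k-1$, and for every nonempty $I$ one has $\sum_{i\in I}(k-|\mathbf{v}_i|)+|\mu_{\mathcal{V}}(I)|\leqslant 4$, precisely because the coordinatewise minima $\mu_{\mathcal{V}}(I)$ cannot detect a multiplicity of $3$ concentrated in a single coordinate. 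Nevertheless the four polynomials of $P(4,\mathcal{W}_2)$, namely $(x-x_1)^3$, $(x-x_2)^3$, $(x-x_3)^3$, $(x-x_1)(x-x_2)(x-x_3)$, are linearly dependent over $\mathbb{F}_q(x_1,x_2,x_3)$ for every prime power $q$: the vector $\mathbf{u}=(3e_3,\,e_2,\,e_1,\,3)^\top$, with $e_j$ the elementary symmetric polynomials in $x_1,x_2,x_3$, is a nonzero null vector of the $4\times4$ coefficient matrix (this is the $b=2$ case of Proposition~\ref{prop2}, and the construction works in every characteristic since $\gcd$ of the integer multipliers $c(j)$ is $1$). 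Padding with all-ones coordinates (Example~\ref{otherexa}) extends this to counterexamples to Conjecture~\ref{conjl} for every $l\geqslant 3$. So no choice of ``canonical ordering of the indices $\xi$ to peel off'' can rescue your induction: the determinant you want to be nonzero is genuinely zero here, even though, as you correctly observe, condition (II) is blind to how multiplicity is distributed across the blocks.

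Your factorisation strategy is sound only in restricted regimes, and these are exactly the ones the paper treats by other means. When all entries lie in $\{0,1\}$ (your ``clean'' case) the statement is the GM-MDS theorem; when at most one, respectively two, coordinates may carry higher multiplicities, i.e.\ properties $V_1(k)$ and $V_2(k)$, the conclusion holds (Theorem~\ref{thml1} of Lovett and Theorem~\ref{thml2} of this paper), but the paper's proof of the $V_2(k)$ case proceeds by a minimal-counterexample argument (Lemmas~\ref{lem21}--\ref{lemn32}), not by an explicit determinant factorisation; the triangular factorisation of Theorem~\ref{thm:triangular_det} is used only under the stronger nested-intersection hypotheses of Theorems~\ref{thm:main1} and~\ref{thm:main2}. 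A smaller caveat: your opening reduction, enlarging $\mathcal{V}$ so that $\sum_{i}(k-|\mathbf{v}_i|)=k$ while preserving (II), is itself not automatic in this multiplicity setting and would need justification; but the decisive point is that the target statement is false for all $l\geqslant 3$, so the correct goal is either the counterexample above or a proof restricted to $l\leqslant 2$.
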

We will show that Conjecture~\ref{conj} is false in general.

Note that if $\mathcal{V}$ satisfies $V(k)$ then $\mathcal{V}$ satisfies $V_l(k)$ for some $l\leqslant n$. Conjecture~\ref{conjl} below is an analogous formulation of Conjecture~\ref{conj} in terms of property $V_l(k)$.

\begin{conj}\label{conjl}
Let $k,m,n\geqslant 1$ and $l\geqslant 0$ be integers where $n\geqslant l$ and let $\mathcal{V}=\{ \mathbf{v}_1,\dots,\mathbf{v}_m \} \subseteq \mathbb{N}^n$. 
Assume that $\mathcal{V}$ satisfies $V_l(k)$. 
Then the polynomials in $P(k,\mathcal{V})$ are linearly independent over $\mathbb{F}_q(x_1,\dots,x_n)$.
\end{conj}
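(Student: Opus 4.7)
The plan is to attempt to prove Conjecture~\ref{conjl} by induction on $l$, mirroring the strategy of Section~\ref{sec:main_result_and_application}: reduce linear independence to the non-vanishing of a determinant, then obtain that non-vanishing by iteratively factoring out common roots.

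\textbf{Setup.} Condition (II) applied to $I = [m]$ gives $|P(k,\mathcal{V})| = \sum_i (k - |\mathbf{v}_i|) \leqslant k - |\mu_{\mathcal{V}}([m])| \leqslant k$, so the coefficient matrix of $P(k,\mathcal{V})$ has at most $k$ rows inside a $k$-dimensional ambient space. Linear independence is equivalent to this matrix having full row rank; augmenting with monomials if necessary turns the question into whether a certain $k \times k$ determinant in $\mathbb{F}_q(x_1,\dots,x_n)$ is nonzero.

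\textbf{Base case, $l = 0$.} Here each $\mathbf{v}_i$ is the indicator of a subset $S_i \subseteq [n]$ with $|S_i| \leqslant k - 1$, and $P(k,\mathbf{v}_i) = \{p(S_i)\cdot x^e : 0 \leqslant e \leqslant k - 1 - |S_i|\}$. I would introduce a ghost index $n+1$ with the specialisation $x_{n+1} = 0$, and encode $p(S_i)\cdot x^e$ as $p(T_{i,e})$ for the multiset $T_{i,e} = S_i \cup \{\underbrace{n+1,\dots,n+1}_{e}\}$. The goal is then to invoke the multiset version of Theorem~\ref{thm:triangular_det} (or, alternatively, Theorem~\ref{gmmds} applied to the expanded family $\{T_{i,e}\}$, after checking that condition (II) of $V_0(k)$ implies the MDS condition on this family), and to verify that the resulting determinant formula remains nonzero under the specialisation $x_{n+1} = 0$.

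\textbf{Inductive step.} Given $\mathcal{V}$ satisfying $V_l(k)$ with $l \geqslant 1$, I would target one of the $l$ unrestricted coordinates, say coordinate $n$, in which the $\mathbf{v}_i$ may exceed $1$. Using the generalised form of Lemma~\ref{lem:factorisation} flagged in the Remark after its proof (which allows $\lambda$ polynomials to share $k - \lambda$ common roots), iteratively peel $(x - x_n)$ factors out of the polynomials in each $P(k,\mathbf{v}_i)$ with $\mathbf{v}_i(n) \geqslant 1$, reducing those entries one at a time. After enough peels, every $\mathbf{v}_i$ has $\mathbf{v}_i(n) \in \{0,1\}$, which, up to relabelling of coordinates, places the residual family into a $V_{l-1}(k')$ instance for some $k' \leqslant k$ to which the inductive hypothesis applies; the accumulated factors should combine into a determinant formula of the same shape as in Theorem~\ref{thm:triangular_det}.

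The main obstacle I expect is the inductive step itself. Condition (II) of $V_l(k)$ constrains $|\mu_{\mathcal{V}}(I)|$ only in aggregate across all coordinates, and does not on its own guarantee that any particular high-multiplicity coordinate is common to enough of the $\mathbf{v}_i$'s for the factorisation chain to proceed, nor does it automatically ensure that the reduced family still satisfies $V_{l-1}$. Since the paper explicitly warns that Conjecture~\ref{conj} is false in general, one must in fact expect the inductive reduction to break for sufficiently large $l$; the plan therefore naturally doubles as a search for the largest $l$ at which every factorisation step survives and, dually, as a search for explicit counterexamples at that threshold.
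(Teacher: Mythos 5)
The statement you are trying to prove is false, and the paper's treatment of it is a refutation, not a proof. For every $l\geqslant 3$ there are explicit counterexamples: the family $\mathcal{W}_2=\{(3,0,0),(0,3,0),(0,0,3),(1,1,1)\}$ satisfies $V_3(4)$, yet the four polynomials $(x-x_1)^3$, $(x-x_2)^3$, $(x-x_3)^3$, $(x-x_1)(x-x_2)(x-x_3)$ are linearly dependent over $\mathbb{F}_q(x_1,x_2,x_3)$ for every $q$ (one checks that the vector $\bigl(3e_3,\,e_2,\,e_1,\,3e_0\bigr)$, with $e_j$ the elementary symmetric polynomials, annihilates the coefficient matrix). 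So the obstacle you flag at the end of your inductive step is not a technical difficulty to be worked around --- it is fatal, and your closing remark that the plan ``doubles as a search for explicit counterexamples'' is exactly where the real content lies. The correct resolution is: the conjecture holds for $l\leqslant 2$ and fails for all $l\geqslant 3$.

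Even restricted to the true range $l\leqslant 2$, your proposed mechanism does not suffice. Theorem~\ref{thm:triangular_det} and the factorisation Lemma~\ref{lem:factorisation} require a nested chain of common roots, $\bigl|\bigcap_{j=1}^{i}S_j\bigr|\geqslant k-i$ for all $i$, which is a far stronger hypothesis than condition (II) of $V_l(k)$; indeed that machinery does not even recover the $l=0$ case (the full GM-MDS theorem), only the special configurations of Theorems~\ref{thm:main1} and~\ref{thm:main2}. The paper's proof for $l=2$ instead runs a minimal-counterexample argument in the style of Lovett: using the structural lemmas on tight sets one shows a minimal counterexample must contain the vectors $(1,\dots,1,k-n+1,0)$ and $(1,\dots,1,0,k-n+1)$, and uniqueness of the former forces every other polynomial in the family to be divisible by $(x-x_n)$ while the corresponding polynomial $r=(x-x_{n-1})^{k-n+1}\prod_{j\in[n-2]}(x-x_j)$ is not, contradicting the assumed dependence. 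If you want a salvageable version of your write-up, split it into (i) the counterexample construction for $l\geqslant 3$ and (ii) a minimal-counterexample proof for $l=2$; the iterated root-peeling induction on $l$ cannot be made to work.
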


The case $l=0$ corresponds to the GM-MDS conjecture since multiple roots are not included in this case.
Lovett proved Theorem~\ref{thml1} below, which corresponds to $l=1$.

\begin{theorem}[See Theorem 1.7 in {\cite{lovett2018proof}}]\label{thml1}
Let $k,m,n\geqslant 1$ be integers and let $\mathcal{V}=\{ \mathbf{v}_1,\dots,\mathbf{v}_m \} \subseteq \mathbb{N}^n$. 
Assume that $\mathcal{V}$ satisfies $V_1(k)$. 
Then the polynomials in $P(k,\mathcal{V})$ are linearly independent over $\mathbb{F}_q(x_1,\dots,x_n)$.
\end{theorem}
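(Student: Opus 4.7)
The plan is to reduce Theorem~\ref{thml1} to the GM-MDS theorem (Theorem~\ref{gmmds}) via an ``unfolding'' trick tailored to the $V_1(k)$ hypothesis, which permits multiplicity only at the $n$-th coordinate.

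Set $e_i:=\mathbf{v}_i(n)$, $T_i:=\{j\in[n-1]:\mathbf{v}_i(j)=1\}$, $E:=\max_i e_i$, and $D:=k-1-\min_i|\mathbf{v}_i|$, and introduce fresh formal variables $y_1,\ldots,y_E$ and $z_1,\ldots,z_D$. For each $i\in[m]$ and each $f\in\{0,\ldots,k-1-|\mathbf{v}_i|\}$, form the unfolded polynomial
\[
q_{i,f}(x)\;:=\;\prod_{j\in T_i}(x-x_j)\,\prod_{r=1}^{e_i}(x-y_r)\,\prod_{s=1}^{f}(x-z_s),
\]
a product of distinct linear factors that recovers the corresponding element of $P(k,\mathbf{v}_i)$ under the specialisation $y_r\mapsto x_n$ and $z_s\mapsto 0$. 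Let $S_{i,f}$ denote the index set of these linear factors. I then verify that the set system $\{S_{i,f}\}$ satisfies the MDS condition: writing $I_{\mathrm{idx}}:=\{i:(i,f)\in I \text{ for some }f\}$ and $f^{\ast}:=\min_{(i,f)\in I}f$, a direct expansion yields
\[
|I|+\Bigl|\textstyle\bigcap_{(i,f)\in I}S_{i,f}\Bigr|\;\leqslant\;\sum_{i\in I_{\mathrm{idx}}}(k-|\mathbf{v}_i|)\,+\,|\mu_{\mathcal V}(I_{\mathrm{idx}})|\,+\,f^{\ast}(1-|I_{\mathrm{idx}}|)\;\leqslant\;k,
\]
the last inequality being $V(k)$ for $\mathcal V$ since $|I_{\mathrm{idx}}|\geqslant 1$ forces $f^{\ast}(1-|I_{\mathrm{idx}}|)\leqslant 0$. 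Invoking Theorem~\ref{gmmds} in its polynomial formulation (equivalent by Lemma~\ref{lem:linindpols} to the $\det C\neq 0$ assertion implicit in the known proofs of GM-MDS) then gives linear independence of the $q_{i,f}$ over the enlarged rational function field $\mathbb F_q(x_1,\ldots,x_{n-1},y_1,\ldots,y_E,z_1,\ldots,z_D)$.

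The crux, and the step I expect to be the main obstacle, is to descend this independence through the specialisation $y_r\mapsto x_n$ and $z_s\mapsto 0$. A nonzero polynomial minor of the coefficient matrix need not survive specialisation in general (for instance, $y_1$ and $x_n$ are linearly independent over $\mathbb F_q(x_n,y_1)$ but coincide when $y_1\mapsto x_n$). To address this, I would inspect a maximal nonsingular minor of the unfolded coefficient matrix and exploit both the symmetry of $q_{i,f}$ in the variables $y_1,\ldots,y_{e_i}$ and in $z_1,\ldots,z_f$, and the structure of $V_1(k)$, to argue that this minor contains no factor of the form $(y_r-y_{r'})$ or $(z_s-z_{s'})$ whose vanishing would destroy the specialisation. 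Making this factor analysis rigorous—most plausibly via an induction that peels off one copy of $x_n$ at a time while preserving the $V_1(k)$ hypothesis—constitutes the technical heart of the proof.
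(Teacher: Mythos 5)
You have correctly identified the location of the gap in your own argument, but you have underestimated how large it is: the specialisation step $y_r\mapsto x_n$ is not a technical detail to be cleaned up afterwards, it is the entire content of the theorem. Your verification that the unfolded set system $\{S_{i,f}\}$ satisfies the MDS condition is fine (and the $z_s$ factors are an unnecessary detour: for fixed $i$ the polynomials $q_{i,0},\dots,q_{i,k-1-|\mathbf{v}_i|}$ span the same space as $g_i\cdot\{1,x,\dots,x^{k-1-|\mathbf{v}_i|}\}$ with $g_i=\prod_{j\in T_i}(x-x_j)\prod_{r\leqslant e_i}(x-y_r)$, so the unfolded family is literally a $V_0(k)$ instance on $n-1+E$ variables and GM-MDS applies directly). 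But notice that this unfolding argument is completely insensitive to the value of $l$: it applies verbatim to any $\mathcal{V}$ satisfying $V_l(k)$ for any $l$, replacing each coordinate of multiplicity $\mathbf{v}_i(j)$ by that many fresh variables. In particular it applies to the sets $\mathcal{W}_b$ of Section~\ref{sec:counterexamples}, where each $(x-x_i)^{2b-1}$ unfolds to a product of $2b-1$ distinct linear factors, the unfolded system trivially satisfies the MDS condition and is linearly independent, and yet Proposition~\ref{prop2} shows the specialised polynomials are linearly \emph{dependent}. So the descent through specialisation genuinely fails in the closely analogous $l\geqslant 3$ situation, and any proof of the $l=1$ case along your lines must use the $V_1(k)$ hypothesis in an essential, quantitative way at exactly the point you have left open. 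The heuristic you offer (symmetry in the $y_r$, absence of factors $(y_r-y_{r'})$ in a maximal minor) is not developed enough to distinguish $l=1$ from $l=3$, and as stated it would "prove" the false Conjecture~\ref{conj}.

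For comparison: this theorem is quoted from Lovett and is not reproved here; Lovett's actual proof (and the proof of the $l=2$ analogue, Theorem~\ref{thml2}, in Section~\ref{sec:special_case}) proceeds by a minimal-counterexample induction on the parameters $(n,k,m,d)$ — merging coordinates, locating an extremal vector of the form $(1,\dots,1,k-n+1)$, and deriving a divisibility contradiction — rather than by specialising a generic GM-MDS instance. If you want to pursue your route, the honest statement of what remains is: show that for a $V_1(k)$ system the determinant of (a suitable maximal minor of) the unfolded coefficient matrix does not vanish on the subvariety $y_1=\dots=y_E$; that claim is equivalent to the theorem itself and is not supplied by the Schwartz--Zippel-type genericity that GM-MDS gives you.
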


We will show that Conjecture~\ref{conjl} is true for $l=2$ (see Section~\ref{sec:special_case}).
However, in Section~\ref{sec:counterexamples} below, we provide constructions for counterexamples to  Conjecture~\ref{conjl} for all $l\geqslant 3$.

\section{Counterexamples to Conjecture~\ref{conj}}\label{sec:counterexamples}
In this section we show that Conjecture~\ref{conj} is false in general. 
More precisely, we show that, for all $l\geqslant 3$, there exists $\mathcal{V} \subseteq \mathbb{N}^n$ for some $n\geqslant l$ such that $\mathcal{V}$ satisfies $V_l(k)$ for some $k\geqslant 1$, but $P(k,\mathcal{V})$ is linearly dependent over $\mathbb{F}_q(x_1,\dots,x_n)$. 

Let $k=m=2b$ and $n=2b-1$ where $b\geqslant 2$ is an integer.
Let $\mathcal{W}_b=\{ \mathbf{w}_1,\dots,\mathbf{w}_{2b} \} \subseteq \mathbb{N}^{2b-1}$ where
$$
\mathbf{w}_i(j)=
\begin{cases}
2b-1 & \text{if } j=i, \\
0  & \text{otherwise},
\end{cases}
$$
for $i\in [2b-1]$ and $\mathbf{w}_{2b}=(1,1,\dots,1)$.
We will show that the set $\mathcal{W}_b$ satisfies $V_{2b-1}(2b)$ but the polynomials in $P(2b,\mathcal{W}_b)$ are linearly dependent over $\mathbb{F}_q(x_1,x_2,\dots,x_{2b-1})$.
Note that $q$ does not depend on $b$ so $\mathbb{F}_q$ could be any finite field.

First we show that the set $\mathcal{W}_b$ satisfies $V_{2b-1}(2b)$.

\begin{prop}\label{prop1}
Let $b\geqslant 2$ be an integer.
Then the set $\mathcal{W}_b$ satisfies $V_{2b-1}(2b)$.
\end{prop}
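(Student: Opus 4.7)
The plan is to verify directly that $\mathcal{W}_b$ satisfies each of the three conditions (I), (II), (III) of Definition~\ref{dfnl} with $k = 2b$, $m = 2b$, $n = 2b-1$, and $l = 2b-1$. Two of these are trivial and the work is concentrated in (II).

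Since $n - l = 0$, condition (III) requires $\mathbf{w}_i \in \{0,1\}^0 \times \mathbb{N}^{2b-1} = \mathbb{N}^{2b-1}$, which is immediate from the definition. Every vector in $\mathcal{W}_b$ has weight exactly $2b - 1 = k - 1$, so condition (I) also follows at once. This reduction also simplifies condition (II): because $\sum_{i \in I}(k - |\mathbf{w}_i|) = |I|$ for every $I$, the inequality to establish becomes
$$
|I| + |\mu_{\mathcal{W}_b}(I)| \leqslant 2b \quad \text{for all nonempty } I \subseteq [2b].
$$

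To verify this, I would split into two cases based on whether the all-ones vector $\mathbf{w}_{2b}$ lies in $I$. If $2b \notin I$, then the vectors in $I$ are standard-basis-type vectors supported at distinct coordinates: for $|I| = 1$ one has $|\mu_{\mathcal{W}_b}(I)| = 2b-1$, giving total $2b$; for $|I| \geqslant 2$, any two distinct members have disjoint supports, so $\mu_{\mathcal{W}_b}(I) = \mathbf{0}$ and the total is $|I| \leqslant 2b - 1$. If $2b \in I$, then for $|I| = 1$ the total is $1 + (2b-1) = 2b$; for $|I| = 2$, say $I = \{i, 2b\}$ with $i \in [2b-1]$, the coordinatewise minimum is $1$ at coordinate $i$ and $0$ elsewhere, yielding total $3 \leqslant 2b$ (using $b \geqslant 2$); and for $|I| \geqslant 3$ every coordinate $j$ admits an index $i \in I \setminus \{2b\}$ with $\mathbf{w}_i(j) = 0$ (because at most one such $i$ could satisfy $i = j$), so again $\mu_{\mathcal{W}_b}(I) = \mathbf{0}$ and the total is $|I| \leqslant 2b$.

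There is no real obstacle here beyond careful case analysis; the only thing to be mindful of is the small case $|I| = 2$ with $2b \in I$, which is where the bound $b \geqslant 2$ is actually used.
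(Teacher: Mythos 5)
Your proof is correct and follows essentially the same route as the paper's: conditions (I) and (III) are checked directly, and (II) reduces to $|I|+|\mu_{\mathcal{W}_b}(I)|\leqslant 2b$, verified by the same case analysis on whether $2b\in I$ and on $|I|$ (with the $|I|=2$, $2b\in I$ case being the only one that uses $b\geqslant 2$, exactly as in the paper).
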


\begin{proof}
For any $i\in [2b]$, we have $|\mathbf{w}_i|=2b-1$ so $\mathcal{W}_b$ satisfies (I). 
Since $\mathcal{W}_b \subseteq \mathbb{N}^{2b-1}$ then $\mathcal{W}_b$ also satisfies (III). 
Now let $I\subseteq [2b]$. 
Note that (II) always holds if $|I|=1$ so assume $|I|>1$. 
Suppose $|I|=2$. 
If $2b\notin I$ then $\mu_{\mathcal{W}_b}(I)=\mathbf{0}$ and hence
$$
\sum_{i\in I}(2b-|\mathbf{w}_i|)+|\mu_{\mathcal{W}_b}(I)|=|I|+0=2\leqslant 2b.
$$
If $2b\in I$ then $|\mu_{\mathcal{W}_b}(I)|=1$ and hence
$$
\sum_{i\in I}(2b-|\mathbf{w}_i|)+|\mu_{\mathcal{W}_b}(I)|=|I|+1=3\leqslant 2b.
$$
Suppose $3\leqslant |I|\leqslant 2b$. Then $\mu_{\mathcal{W}_b}(I)=\mathbf{0}$ and hence
$$
\sum_{i\in I}(2b-|\mathbf{w}_i|)+|\mu_{\mathcal{W}_b}(I)|=|I|+0=|I|\leqslant 2b.
$$
In any case, $\mathcal{W}_b$ satisfies (II).
Therefore, $\mathcal{W}_b$ satisfies $V_{2b-1}(2b)$.
\end{proof}

Let $x_1,x_2,\dots,x_{2b-1}$ be formal variables and consider the polynomials $p_1, \dots, p_{2b} \in \mathbb{F}_q(x_1,\dots,x_{2b-1})[x]$, where $p_i(x) = (x-x_i)^{2b-1}$ for $i \in [2b-1]$ and $p_{2b}(x) =\prod_{i=1}^{2b-1}(x-x_i)$.
Clearly, we have $P(2b,\mathcal{W}_b)=\left \{ p_1,\dots,p_{2b-1},p_{2b} \right \}$.

\begin{prop}\label{prop2}
The polynomials in $P(2b,\mathcal{W}_b)$ are linearly dependent over $\mathbb{F}_q(x_1,x_2,\dots,x_{2b-1})$.
\end{prop}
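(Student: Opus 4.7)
The plan is to exhibit an explicit nontrivial linear dependence among $p_1,\dots,p_{2b-1},p_{2b}$. The key point is that $p_{2b}(x)=\prod_{j=1}^{2b-1}(x-x_j)$ vanishes at each $x_j$, while for $i\in[2b-1]$ we have $p_i(x_j)=(x_j-x_i)^{2b-1}$. So if I can find coefficients $\alpha_1,\dots,\alpha_{2b-1}$ in $\mathbb{F}_q(x_1,\dots,x_{2b-1})$, not all zero, satisfying $\sum_{i=1}^{2b-1}\alpha_i(x_j-x_i)^{2b-1}=0$ for every $j\in[2b-1]$, then $q(x):=\sum_{i=1}^{2b-1}\alpha_i p_i(x)$ is a polynomial of degree at most $2b-1$ that vanishes at the $2b-1$ distinct field elements $x_1,\dots,x_{2b-1}$. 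It follows that $q(x)=\gamma\cdot p_{2b}(x)$ for some $\gamma\in\mathbb{F}_q(x_1,\dots,x_{2b-1})$, and rearranging yields a nontrivial relation $\sum_{i=1}^{2b-1}\alpha_i p_i-\gamma p_{2b}=0$.

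Finding such $\alpha_i$'s reduces to proving that the $(2b-1)\times(2b-1)$ matrix $N$ with $N_{j,i}=(x_j-x_i)^{2b-1}$ is singular. My approach is to work over the polynomial ring $\mathbb{Z}[x_1,\dots,x_{2b-1}]$, where $\det N$ is a well-defined integer polynomial. Since $2b-1$ is odd, $(x_i-x_j)^{2b-1}=-(x_j-x_i)^{2b-1}$, so $N^{\top}=-N$ as an identity in $\mathbb{Z}[x_1,\dots,x_{2b-1}]$. Hence
\[
\det N=\det N^{\top}=\det(-N)=(-1)^{2b-1}\det N=-\det N,
\]
giving $2\det N=0$, and since $\mathbb{Z}[x_1,\dots,x_{2b-1}]$ is torsion-free we conclude $\det N=0$ as a polynomial identity. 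Reducing modulo the characteristic of $\mathbb{F}_q$ then yields $\det N=0$ over $\mathbb{F}_q(x_1,\dots,x_{2b-1})$, producing the required nonzero kernel vector $\alpha=(\alpha_1,\dots,\alpha_{2b-1})$.

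The main subtlety is that the final ``$2\det N=0\Rightarrow\det N=0$'' step requires the invertibility of $2$, which fails in characteristic~$2$. Performing the skew-symmetry manipulation in $\mathbb{Z}[x_1,\dots,x_{2b-1}]$ rather than directly over $\mathbb{F}_q$ is what bypasses this issue; the vanishing of $\det N$ then follows in every characteristic from one calculation over $\mathbb{Z}$.
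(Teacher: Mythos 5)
Your proof is correct, but it takes a genuinely different route from the paper. The paper works directly with the coefficient matrix $M=C(P(2b,\mathcal{W}_b))$ and exhibits an \emph{explicit} null vector $\mathbf{u}(j)=c(j)e_{2b-j}$, where the integers $c(j)$ are built from $\operatorname{lcm}\bigl(\binom{2b-1}{0},\dots,\binom{2b-1}{b-1}\bigr)$; the point of that construction is that $\gcd(c(1),\dots,c(2b))=1$ guarantees the vector survives reduction modulo any characteristic, and the verification $M\mathbf{u}=\mathbf{0}$ splits into a parity-pairing argument for the last row and the identity $\sum_j(-x_i)^{j-1}e_{2b-j}=\prod_\ell(x_\ell-x_i)=0$ for the others. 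You instead pass to the evaluation matrix $N_{j,i}=p_i(x_j)=(x_j-x_i)^{2b-1}$, observe that it is skew-symmetric of odd order (since $2b-1$ is odd and the diagonal vanishes), and conclude $\det N=0$; a kernel vector $\alpha$ then gives $\sum_i\alpha_i p_i$ vanishing at the $2b-1$ distinct points $x_1,\dots,x_{2b-1}$, hence equal to $\gamma\,p_{2b}$ by the degree bound, which is a nontrivial relation (and still one even if $\gamma=0$). Your handling of characteristic $2$ — doing the skew-symmetry cancellation $2\det N=0$ in the torsion-free ring $\mathbb{Z}[x_1,\dots,x_{2b-1}]$ and only then reducing — is exactly the right fix, and it parallels the paper's remark that its own dependence already lives over $\mathbb{Z}$. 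What you lose relative to the paper is explicitness: the paper writes down the dependence coefficients, while your argument is purely existential; what you gain is brevity and a conceptual reason (odd-order skew-symmetry) for the degeneracy.
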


In fact, in the above proposition, the finite field $\mathbb{F}_q$ can be replaced by the ring of integers $\mathbb{Z}$.

\begin{proof}
Let $e_{j-1}$ denote the elementary symmetric polynomial in the $2b-1$ variables $x_1,\dots,x_{2b-1}$ with degree $j-1$ where $j\in [2b]$. Then the polynomials in $P(2b,\mathcal{W}_b)$ are linearly dependent over $\mathbb{F}_q(x_1,\dots,x_{2b-1})$ if and only if the rank of the following $2b\times 2b$ matrix
$$
M=C\left(P(2b,\mathcal{W}_b)\right)=
\begin{pmatrix}
  1 & -(2b-1)x_1 & \cdots & -{x_1}^{2b-1} \\
  1 & -(2b-1)x_2 & \cdots & -{x_2}^{2b-1} \\
  \vdots  & \vdots  & \ddots & \vdots  \\
  1 & -(2b-1)x_{2b-1} & \cdots & -{x_{2b-1}}^{2b-1} \\
  e_0 & -e_1 & \cdots & -e_{2b-1}
 \end{pmatrix}
$$
is less than $2b$.
Alternatively, for all $i,j\in [2b]$, we can write the elements of $M$ as
$$
M_{ij}=
\begin{cases}
\binom{2b-1}{j-1} (-x_i)^{j-1} & \text{if } 1\leqslant i\leqslant 2b-1, \\
   (-1)^{j-1}  e_{j-1}       & \text{if } i=2b.
\end{cases}
$$
For $j\in [b]$, define $c(j)$ to be the integer such that 
$$
\binom{2b-1}{j-1}c(j)=\text{lcm}\left(\binom{2b-1}{0},\binom{2b-1}{1},\dots,\binom{2b-1}{b-1}\right).
$$
And, for $j\in \left\{ b+1,\dots,2b \right\}$, let $c(j)=c(2b+1-j)$.
It follows that $\text{gcd}\left(c(1),c(2),\dots,c(2b)\right)=1$.
Now we define a $2b\times 1$ vector $\mathbf{u}$ where
$$
\mathbf{u}(j)= c(j) e_{2b-j}   
$$
for all $j\in [2b]$.
Note that, over any finite field $\mathbb{F}_q$, not all of the $c(j)$ can be zero since $\text{gcd}\left(c(1),c(2),\dots,c(2b)\right)=1$.
Next we will show that $\mathbf u$ is a (right) null vector of $M$.

Let $i=2b$. Then 
$$
\sum_{j=1}^{2b}M_{ij} \mathbf{u}(j)=\sum_{j=1}^{2b}(-1)^{j-1}  e_{j-1} \mathbf{u}(j).
$$
Let $j\in [b]$ so $2b+1-j\in \left\{b+1,\dots,2b \right\}$.
It is clear that $c(j)=c(2b+1-j)$ and, since $j-1$ and $2b-j$ have opposite parity, we have
\begin{align*}
&(-1)^{j-1} e_{j-1} \mathbf{u}(j)+(-1)^{2b-j}  e_{2b-j} \mathbf{u}(2b+1-j)=0.
\end{align*}
Therefore, if $i=2b$ then
\begin{align}
\sum_{j=1}^{2b}M_{ij} \mathbf{u}(j)&=\sum_{j=1}^{2b}(-1)^{j-1}  e_{j-1} \mathbf{u}(j) \nonumber\\
&= \sum_{j=1}^{b}\left[ (-1)^{j-1}  e_{j-1} \mathbf{u}(j)+(-1)^{2b-j}  e_{2b-j} \mathbf{u}(2b+1-j)\right] \nonumber\\
&= 0.
\end{align}
On the other hand, let $i\in [2b-1]$. Then
\begin{align*}
\sum_{j=1}^{2b}M_{ij} \mathbf{u}(j)&=\sum_{j=1}^{2b}\binom{2b-1}{j-1} (-x_i)^{j-1}\mathbf{u}(j) \\
&= \sum_{j=1}^{2b}\binom{2b-1}{j-1} c(j) (-x_i)^{j-1} e_{2b-j}.
\end{align*}
Using the symmetry of binomial coefficients, we have
$$
\binom{2b-1}{j-1}c(j)=\text{lcm}\left(\binom{2b-1}{0},\binom{2b-1}{1},\dots,\binom{2b-1}{b-1}\right)
$$
for all $j\in [2b]$.
Thus, we obtain
$$
\sum_{j=1}^{2b}M_{ij} \mathbf{u}(j)=\text{lcm}\left(\binom{2b-1}{0},\binom{2b-1}{1},\dots,\binom{2b-1}{b-1}\right) \sum_{j=1}^{2b}(-x_i)^{j-1} e_{2b-j}.
$$
Observe that
$$
\sum_{j=1}^{2b}(-x_i)^{j-1} e_{2b-j}=(x_1-x_i)(x_2-x_i)\cdots (x_{2b-1}-x_i)=0
$$
since $i\in [2b-1]$.
Therefore, if $i\in [2b-1]$ then
\begin{align}
\sum_{j=1}^{2b}M_{ij} \mathbf{u}(j)=0.
\end{align}
Combining $(1)$ and $(2)$, we obtain $M\mathbf{u}=\mathbf{0}$ and hence $\mathbf{u}$ is a null vector of the matrix $M$.
Therefore, we conclude that the rank of $M$ is less than $2b$, which means that the polynomials in $P(2b,\mathcal{W}_b)$ are linearly dependent over $\mathbb{F}_q(x_1,\dots,x_{2b-1})$.
\end{proof}

\begin{exa}\label{exab2}
By Proposition~\ref{prop1}, the set $\mathcal{W}_2=\{(3,0,0)$, $(0,3,0)$, $(0,0,3)$, $(1,1,1) \}$ satisfies $V_3(4)$ but, by Proposition~\ref{prop2}, the polynomials $(x-x_1)^3$,$(x-x_2)^3$,$(x-x_3)^3$,$(x-x_1)(x-x_2)(x-x_3)$ are linearly dependent over $\mathbb{F}_q(x_1,x_2,x_3)$.
This gives us a counterexample to Conjecture~\ref{conj} and Conjecture~\ref{conjl} for $l = 3$.
\end{exa}

\begin{exa}\label{otherexa}
Let $\mathcal{Y}=\{(1,3,0,0),(1,0,3,0),(1,0,0,3),(1,1,1,1)\}$.
It is easy to check that $\mathcal{Y}$ satisfies $V_l(5)$ for $l=3,4$.
However, from Example~\ref{exab2}, it follows that the polynomials $(x-x_1)(x-x_2)^3$,$(x-x_1)(x-x_3)^3$,$(x-x_1)(x-x_4)^3$,$(x-x_1)(x-x_2)(x-x_3)(x-x_4)$ are linearly dependent over $\mathbb{F}_q(x_1,x_2,x_3,x_4)$.
\end{exa}

In general, given any $l\geqslant 3$, let $m=4$ and take any $k,n$ where $n\geqslant l$ and $n=k-1$.
Let $$\mathcal{V}=\{(\underbrace{1,\dots,1}_{n-3},3,0,0),\allowbreak (\underbrace{1,\dots,1}_{n-3},0,3,0),\allowbreak (\underbrace{1,\dots,1}_{n-3},0,0,3),\allowbreak (\underbrace{1,\dots,1}_n)\}.$$
We have that $\mathcal{V}$ satisfies $V_l(k)$ but the polynomials in $P(k,\mathcal{V})$ are linearly dependent over $\mathbb{F}_q(x_1,x_2,\dots,x_n)$.
We could also apply similar construction for other values of $b>2$.

\section{The special case of Conjecture~\ref{conjl} when $l=2$}\label{sec:special_case}
In this section we show that the special case of Conjecture~\ref{conjl} is true for $l\leqslant 2$.
We will prove the following.

\begin{theorem}\label{thml2}
Let $k,m\geqslant 1$ and $n\geqslant 2$ be integers and let $\mathcal{V}=\{ \mathbf{v}_1,\dots,\allowbreak \mathbf{v}_m \} \subseteq \mathbb{N}^n$.
Assume that $\mathcal{V}$ satisfies $V_2(k)$.
Then the polynomials in $P(k,\mathcal{V})$ are linearly independent over $\mathbb{F}_q(x_1,\dots,x_n)$.
\end{theorem}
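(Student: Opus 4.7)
The plan is to prove Theorem~\ref{thml2} by induction on $A := \max_{i \in [m]} \mathbf{v}_i(n-1)$, reducing the problem to the already-established $V_1$ case (Theorem~\ref{thml1}).

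For the base case $A = 0$, every $\mathbf{v}_i$ satisfies $\mathbf{v}_i(n-1) = 0$, so $\mathcal{V}$ already satisfies $V_1(k)$ (with coordinate $n$ as the sole potentially high-multiplicity coordinate), and Theorem~\ref{thml1} gives the conclusion immediately.

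For the inductive step, assume $A \geq 1$ and that the theorem holds for all $V_2$-systems whose maximum $(n-1)$-th coordinate is strictly less than $A$. Suppose for contradiction a nontrivial $K$-linear relation $\sum_{i, e}\lambda_{i, e}\,p(\mathbf{v}_i)(x)\,x^e = 0$ exists, where $K := \mathbb{F}_q(x_1,\dots,x_n)$. Setting $K'' := \mathbb{F}_q(x_1,\dots,x_{n-2},x_n)$ and clearing denominators, we may take $\lambda_{i, e} \in K''[x_{n-1}]$ with $\gcd_{K''[x_{n-1}]}(\{\lambda_{i, e}\}) = 1$. Factor $p(\mathbf{v}_i)(x) = (x - x_{n-1})^{a_i} g_i(x)$ with $a_i := \mathbf{v}_i(n-1)$ and $g_i(x) \in K''[x]$, and set $\mathcal{V}_A := \{\mathbf{v}_i : a_i = A\}$ together with $\mathcal{V}_A' := \{\mathbf{v}_i - A\,e_{n-1} : \mathbf{v}_i \in \mathcal{V}_A\}$, regarded as vectors in $\mathbb{N}^{n-1}$ after deletion of coordinate $n-1$.

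The key structural observation is that $\mathcal{V}_A'$ satisfies $V_1(k - A)$ in the variables $x_1,\dots,x_{n-2},x_n$. Indeed, since every vector in $\mathcal{V}_A$ has value $A$ at position $n-1$, one verifies directly that for any $J \subseteq \mathcal{V}_A$,
\[
|\mu_{\mathcal{V}_A'}(J)| = |\mu_\mathcal{V}(J)| - A \quad\text{and}\quad (k - A) - |\mathbf{v}_i - A\,e_{n-1}| = k - |\mathbf{v}_i|,
\]
so the $V(k - A)$-inequality on $\mathcal{V}_A'$ coincides with the $V(k)$-inequality on $\mathcal{V}_A \subseteq \mathcal{V}$. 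Theorem~\ref{thml1} then provides $K''$-linear independence of $\{g_i(x)\,x^e : \mathbf{v}_i \in \mathcal{V}_A,\ 0 \leq e \leq k - 1 - |\mathbf{v}_i|\}$, and this independence persists over $K = K''(x_{n-1})$ since these polynomials lie in $K''[x]$.

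The main obstacle is to extract from the hypothesized relation a nontrivial $K''$-linear dependence among these $g_i\,x^e$ (with $\mathbf{v}_i \in \mathcal{V}_A$) to contradict the independence just established. My plan is to view the identity in $K''[x_{n-1},x]$ and examine the coefficient of the top power of $x_{n-1}$. The delicate step is to arrange, by choosing among nontrivial relations one of minimum total $x_{n-1}$-degree subject to the content normalization, that the leading form in $x_{n-1}$ isolates the $\mathcal{V}_A$-contribution cleanly: contributions from $\mathbf{v}_i \in \mathcal{V}_{<A}$ would require $\deg_{x_{n-1}}(\lambda_{i,e}) > 0$ to reach the leading degree, and ensuring that such contributions can be absorbed or forced to vanish requires careful analysis of the $V_2$ structure. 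Once the leading form isolates $\mathcal{V}_A$, it becomes a nontrivial $K''$-relation among a subset of the independent family, yielding a contradiction and letting us iterate on $\mathcal{V}_{<A}$, whose maximum at position $n-1$ is strictly smaller, invoking the inductive hypothesis. This final separation step is what distinguishes $l = 2$ from $l \geq 3$: the counterexamples in Section~\ref{sec:counterexamples} show that such a clean isolation of a ``hardest'' coordinate is no longer possible when three or more coordinates carry arbitrary multiplicities.
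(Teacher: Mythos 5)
There is a genuine gap at exactly the step you flag as ``delicate,'' and it is the step that carries the whole theorem. Your reductions up to that point are fine: the base case $A=0$ correctly falls under Theorem~\ref{thml1}, and your verification that $\mathcal{V}_A'$ satisfies $V_1(k-A)$ (so that $\{g_i(x)x^e : \mathbf{v}_i \in \mathcal{V}_A\} = P(k-A,\mathcal{V}_A')$ is independent over $K''$ and hence over $K$) is correct. But the extraction of a contradiction from the hypothesized relation is not carried out, and the leading-coefficient idea does not isolate $\mathcal{V}_A$. If $D=\max_{i,e}\bigl(\deg_{x_{n-1}}\lambda_{i,e}+a_i\bigr)$, the coefficient of $x_{n-1}^D$ is a $K''$-relation among the polynomials $g_i(x)x^e$ for \emph{all} indices $(i,e)$ attaining the maximum, and these can include $i$ with $a_i<A$ whenever $\deg_{x_{n-1}}\lambda_{i,e}>0$. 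The resulting relation therefore lives in the family $\{g_i x^e : i\in[m]\}$ obtained by stripping \emph{different} powers of $(x-x_{n-1})$ from different rows, while keeping the original ranges of $e$; this family satisfies no $V$-type condition in general and can genuinely be dependent, so no contradiction follows. Minimizing the total $x_{n-1}$-degree of the relation does not obviously force the low-$a_i$ contributions to vanish either, and you give no argument for it. Since this is precisely the point where the $V_2$ hypothesis on the \emph{whole} system (not just on $\mathcal{V}_A$) must enter --- your argument so far would apply verbatim to the $l=3$ counterexamples of Section~\ref{sec:counterexamples} up to this step --- the proof cannot be considered complete.

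For comparison, the paper's proof takes a different route: a minimal counterexample in the parameters $(n,k,m,d)$, combined with Lovett's structural lemmas (no vector dominates another, $\mu_{\mathcal{V}}([m])=\mathbf{0}$, and tight sets are trivial). The role of $l=2$ appears in Lemma~\ref{lemn31}: if the vector with last coordinate $0$ had another zero coordinate among the first $n-2$, one could merge two coordinates (substitute $x_{n-2}$ for $x_n$) and still have a $V_2$ system in $n-1$ variables, contradicting minimality --- this merging is exactly what fails when three coordinates carry unbounded multiplicities. Lemma~\ref{lemn32} then pins down the extremal vectors $(1,\dots,1,k-n+1,0)$ and $(1,\dots,1,0,k-n+1)$, and the final contradiction is a divisibility argument: every polynomial in $P(k,\mathcal{V})$ except $r=(x-x_{n-1})^{k-n+1}\prod_{j\in[n-2]}(x-x_j)$ is divisible by $(x-x_n)$, so $r$ cannot lie in their span. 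If you want to salvage your induction on the maximal $(n-1)$-coordinate, you would need to import machinery of comparable strength (in particular the tight-set lemma) to control the cross terms; as written, the argument is a plan rather than a proof.
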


To prove Theorem~\ref{thml2}, following \cite{lovett2018proof}, we will apply the method of minimal counterexample. 
The minimality here is with respect to the parameters $(n,k,m,d)$ in the lexicographical order where $d=|P(k,\mathcal{V})|$. 
We will use the lemmas below to complete our proof of Theorem~\ref{thml2}.
We omit the proofs of Lemma~\ref{lem21}, Lemma~\ref{lem22}, and Lemma~\ref{lem24}, which are very similar to the proofs of the corresponding lemmas in \cite{lovett2018proof}.

Given two vectors $\mathbf{v},\mathbf{w}\in \mathbb{N}^n$ we write $\mathbf{v}\leqslant \mathbf{w}$ if $\mathbf{v}(i)\leqslant \mathbf{w}(i)$ for all $i\in [n]$.

We require three lemmas proved in \cite{lovett2018proof}.

\begin{lem}[See Lemma 2.1 in {\cite{lovett2018proof}}]\label{lem21} 
Suppose $\mathcal{V}=\{ \mathbf{v}_1,\dots,\mathbf{v}_m \} \subseteq \{0,1\}^{n-2} \times \mathbb{N}^2$ is a minimal counterexample to Theorem~\ref{thml2}. 
Then there do not exist distinct $i,j\in [m]$ such that $\mathbf{v}_i\leqslant \mathbf{v}_j$.
\end{lem}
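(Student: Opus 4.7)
The plan is to derive a contradiction directly from conditions (I) and (II) of property $V_2(k)$, without making essential use of the minimality of $\mathcal{V}$; the conclusion in fact holds for any $\mathcal{V}$ satisfying $V(k)$, and the minimality hypothesis in the statement merely reflects the structure of the overarching proof strategy for Theorem~\ref{thml2} via minimal counterexample.

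First I would suppose, for contradiction, that there exist distinct indices $i, j \in [m]$ with $\mathbf{v}_i \leqslant \mathbf{v}_j$ in the componentwise order. Taking $I = \{i,j\}$ in condition (II), since $\mathbf{v}_i(\ell) \leqslant \mathbf{v}_j(\ell)$ for every $\ell \in [n]$ we have $\mu_{\mathcal{V}}(I)(\ell) = \mathbf{v}_i(\ell)$, so $\mu_{\mathcal{V}}(I) = \mathbf{v}_i$ and hence $|\mu_{\mathcal{V}}(I)| = |\mathbf{v}_i|$. Condition (II) then reads
$$
(k - |\mathbf{v}_i|) + (k - |\mathbf{v}_j|) + |\mathbf{v}_i| \;=\; 2k - |\mathbf{v}_j| \;\leqslant\; k,
$$
which simplifies to $|\mathbf{v}_j| \geqslant k$, contradicting condition (I), which forces $|\mathbf{v}_j| \leqslant k-1$.

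I do not foresee any real obstacle for this approach. As a fallback, if one wanted an argument that genuinely exploits minimality in the style of~\cite{lovett2018proof}, the idea would be to set $\mathcal{V}' = \mathcal{V} \setminus \{\mathbf{v}_j\}$ and observe that $\mathcal{V}'$ inherits $V_2(k)$ as a subfamily, while each polynomial $\prod_\ell (x-x_\ell)^{\mathbf{v}_j(\ell)} x^e$ in $P(k,\mathbf{v}_j)$ factors as $\prod_\ell (x-x_\ell)^{\mathbf{v}_i(\ell)}$ times a polynomial of degree at most $k-1-|\mathbf{v}_i|$, and therefore lies in $\mathrm{span}\,P(k,\mathbf{v}_i) \subseteq \mathrm{span}\,P(k,\mathcal{V}')$. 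The delicate step in that route would be promoting the resulting equality of spans to actual linear dependence of $P(k,\mathcal{V}')$, which is what is needed to contradict minimality of $(n,k,m,d)$; the direct argument above sidesteps this issue entirely.
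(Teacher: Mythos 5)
Your direct argument is correct and is exactly the argument of Lemma~2.1 in Lovett's paper, to which this paper defers (the proof is omitted here as ``very similar'' to Lovett's): taking $I=\{i,j\}$ in condition (II) yields $2k-|\mathbf{v}_j|\leqslant k$, i.e.\ $|\mathbf{v}_j|\geqslant k$, contradicting (I). Your side remark is also accurate --- minimality plays no role, and the hypothesis is stated only to fit the minimal-counterexample framework of the overall proof.
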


\begin{lem}[See Lemma 2.2 in {\cite{lovett2018proof}}]
\label{lem22} 
Suppose $\mathcal{V}=\{ \mathbf{v}_1,\dots,\mathbf{v}_m \} \subseteq \{0,1\}^{n-2} \times \mathbb{N}^2$ is a minimal counterexample to Theorem~\ref{thml2}. 
Then $\lvert\mu_{\mathcal{V}}([m])\rvert=0$.
\end{lem}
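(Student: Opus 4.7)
The plan is to argue by contradiction, peeling off a common linear factor from every polynomial in $P(k,\mathcal{V})$. Suppose $|\mu_{\mathcal{V}}([m])|>0$, so there is a coordinate $j\in[n]$ with $\mathbf{v}_i(j)\geqslant 1$ for every $i\in[m]$. Then every element of $P(k,\mathcal{V})$ carries the factor $(x-x_j)$, and dividing each element by $(x-x_j)$ is a bijection onto a (multi)set of polynomials that is linearly dependent over $\mathbb{F}_q(x_1,\dots,x_n)$ if and only if the original is.

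Let $\mathbf{e}_j\in\mathbb{N}^n$ denote the $j$-th standard basis vector and put $\mathbf{v}_i':=\mathbf{v}_i-\mathbf{e}_j$ and $\mathcal{V}':=\{\mathbf{v}_1',\dots,\mathbf{v}_m'\}$. A direct comparison of definitions shows that the quotient polynomials above are exactly the elements of $P(k-1,\mathcal{V}')$, so $|P(k-1,\mathcal{V}')|=|P(k,\mathcal{V})|=d$. The next step is to verify that $\mathcal{V}'$ satisfies $V_2(k-1)$. For property (I) one has $|\mathbf{v}_i'|=|\mathbf{v}_i|-1\leqslant k-2$. For property (II) and any nonempty $I\subseteq[m]$, using $\mu_{\mathcal{V}'}(I)=\mu_{\mathcal{V}}(I)-\mathbf{e}_j$,
\[
\sum_{i\in I}\bigl((k-1)-|\mathbf{v}_i'|\bigr)+|\mu_{\mathcal{V}'}(I)|=\sum_{i\in I}(k-|\mathbf{v}_i|)+|\mu_{\mathcal{V}}(I)|-1\leqslant k-1.
\]
Property (III) is the only part that requires a case analysis and is discussed below.

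Once $\mathcal{V}'$ is known to satisfy $V_2(k-1)$, I would invoke minimality: the parameter tuple $(n,k-1,m,d)$ is strictly smaller than $(n,k,m,d)$ in the lexicographic order, so the polynomials in $P(k-1,\mathcal{V}')$ must be linearly independent over $\mathbb{F}_q(x_1,\dots,x_n)$. Multiplying them back by $(x-x_j)$ then yields the linear independence of $P(k,\mathcal{V})$, contradicting the choice of $\mathcal{V}$ as a counterexample.

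The main obstacle is checking property (III) for $\mathcal{V}'$, and it is here that the restriction $l=2$ is actually used. If $j\in\{n-1,n\}$, the subtraction happens in one of the last two coordinates, which are already unconstrained in $\mathbb{N}$, and there is nothing to verify. If instead $j\in[n-2]$, then $\mathbf{v}_i(j)\in\{0,1\}$, so the assumption $\mathbf{v}_i(j)\geqslant 1$ for every $i$ forces $\mathbf{v}_i(j)=1$; consequently $\mathbf{v}_i'(j)=0\in\{0,1\}$, and $\mathcal{V}'\subseteq\{0,1\}^{n-2}\times\mathbb{N}^2$ as required. Without the $\{0,1\}$-restriction on the first $n-2$ coordinates this reduction would still produce a smaller instance, but the hypothesis $\mathbf{v}_i(j)\geqslant 1$ would not automatically fit the format of $V_l(k)$, which is what blocks the analogous argument from reaching $l\geqslant 3$ and is consistent with the counterexamples of Section~\ref{sec:counterexamples}.
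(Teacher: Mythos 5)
Your proof is correct and is essentially the argument the paper has in mind: the paper omits this proof precisely because it is the standard reduction from Lovett's Lemma 2.2, namely dividing out the common factor $(x-x_j)$, passing to $\mathcal{V}'=\{\mathbf{v}_i-\mathbf{e}_j\}$ with parameters $(n,k-1,m,d)$, verifying $V_2(k-1)$, and invoking minimality. Your checks of (I)--(III), including the observation that the $\{0,1\}$-constraint is preserved because a common coordinate in $[n-2]$ must equal $1$, are all sound.
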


Suppose $\mathcal{V}$ satisfies $V(k)$. 
A subset $I\subseteq [m]$ is called \textit{tight} for $\mathcal{V}$ if $\sum_{i\in I}(k-|\mathbf{v}_i|)+|\mu_{\mathcal{V}}(I)|=k$, that is, we have equality in (II).

\begin{lem}[See Lemma 2.4 in {\cite{lovett2018proof}}]\label{lem24}
Suppose $\mathcal{V}=\{ \mathbf{v}_1,\dots,\mathbf{v}_m \} \subseteq \{0,1\}^{n-2} \times \mathbb{N}^2$ is a minimal counterexample to Theorem~\ref{thml2}. 
If $I \subseteq [m]$ is tight for $\mathcal{V}$, then $|I|=1$ or $|I|=m$.
\end{lem}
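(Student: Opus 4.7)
Suppose for contradiction that $I \subseteq [m]$ is tight for $\mathcal{V}$ with $2 \leqslant |I| \leqslant m - 1$. The plan is to split $\mathcal{V}$ at $I$ into two instances $\mathcal{V}_1$ and $\mathcal{V}_2$, both satisfying $V_2$ at appropriate parameters and both strictly smaller than $\mathcal{V}$ in the lexicographic order on $(n,k,m,d)$. Minimality then supplies linear independence of $P(k',\mathcal{V}_1)$ and $P(k,\mathcal{V}_2)$, and these two independences will glue together to force $P(k,\mathcal{V})$ to be linearly independent, contradicting the counterexample hypothesis.

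Set $\mu = \mu_{\mathcal{V}}(I)$, $g(x) = \prod_{j \in [n]}(x - x_j)^{\mu(j)}$, and $k' = k - |\mu|$; tightness of $I$ reads $\sum_{i \in I}(k - |\mathbf{v}_i|) = k'$. Define
\[
\mathcal{V}_1 = \{\mathbf{v}_i - \mu : i \in I\}, \qquad \mathcal{V}_2 = \{\mathbf{v}_i : i \in [m] \setminus I\} \cup \{\mu\}.
\]
Both lie in $\{0,1\}^{n-2} \times \mathbb{N}^2$ because $\mu(j) = \min_{i \in I} \mathbf{v}_i(j) \in \{0,1\}$ for $j \leqslant n-2$ and hence $\mathbf{v}_i(j) - \mu(j) \in \{0,1\}$ as well. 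The $V(k')$-condition (II) for $\mathcal{V}_1$ reduces directly to (II) for $\mathcal{V}$ via the identities $k' - |\mathbf{v}_i - \mu| = k - |\mathbf{v}_i|$ and $\mu_{\mathcal{V}_1}(J) = \mu_{\mathcal{V}}(J) - \mu$. For $\mathcal{V}_2$, condition (II) on subsets not containing the new index of $\mu$ is inherited from $\mathcal{V}$; on subsets that do contain it, writing $J' = J \setminus \{\mu\}$, the required inequality $(k - |\mu|) + \sum_{i \in J'}(k - |\mathbf{v}_i|) + |\mu_{\mathcal{V}}(I \cup J')| \leqslant k$ follows by substituting the tight equality $k - |\mu| = \sum_{i \in I}(k - |\mathbf{v}_i|)$ into (II) for $\mathcal{V}$ applied to $I \cup J'$. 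Strict lex decrease is then clear: $|\mathcal{V}_2| = m - |I| + 1 < m$, and $\mathcal{V}_1$ either drops $k$ (when $|\mu| > 0$) or drops $m$ to $|I| < m$ (when $|\mu| = 0$).

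The final step is the polynomial gluing. For $i \in I$ the identity $k - 1 - |\mathbf{v}_i| = k' - 1 - |\mathbf{v}_i - \mu|$ gives $P(k, \mathbf{v}_i) = g(x) \cdot P(k', \mathbf{v}_i - \mu)$ and so $\bigcup_{i \in I} P(k, \mathbf{v}_i) = g(x) \cdot P(k', \mathcal{V}_1)$; separately, $P(k,\mu) = \{g(x)\, x^e : 0 \leqslant e \leqslant k' - 1\} \subseteq P(k,\mathcal{V}_2)$. Given any alleged vanishing linear combination of $P(k,\mathcal{V})$, I decompose it as $g(x) f(x) + h(x) = 0$, where $f(x) = \sum_{e=0}^{k'-1} a_e\, x^e$ is the combination of $P(k',\mathcal{V}_1)$ extracted from the $I$-part, and $h(x)$ is the combination of $\bigcup_{i \notin I} P(k, \mathbf{v}_i)$ from the remaining part. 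Since each $g(x)\, x^e$ lies in $P(k,\mu) \subseteq P(k,\mathcal{V}_2)$, the entire expression is a vanishing combination over $P(k,\mathcal{V}_2)$; independence forces every $a_e = 0$ and every coefficient in $h$ to vanish. Then $f = 0$ combined with independence of $P(k',\mathcal{V}_1)$ kills the remaining coefficients, contradicting the counterexample status of $\mathcal{V}$.

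I expect the main obstacle to be the verification of condition (II) for $\mathcal{V}_2$, where the tight equality on $I$ must interact precisely with the enlarged subset $I \cup J'$; the rest is routine once the factorization through $g(x)$ is set up and the parameter count $k - 1 - |\mathbf{v}_i| = k' - 1 - |\mathbf{v}_i - \mu|$ is in hand.
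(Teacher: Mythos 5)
Your proposal is correct and follows essentially the same route as the paper, which defers to Lovett's Lemma 2.4: split $\mathcal{V}$ at the tight set $I$ into $\mathcal{V}_1=\{\mathbf{v}_i-\mu_{\mathcal{V}}(I): i\in I\}$ with parameter $k-|\mu_{\mathcal{V}}(I)|$ and $\mathcal{V}_2=\{\mathbf{v}_i : i\notin I\}\cup\{\mu_{\mathcal{V}}(I)\}$, check the property $V_2$ at the appropriate parameters, invoke minimality, and glue through $g(x)$. The only point you leave tacit is that $\mu_{\mathcal{V}}(I)$ could a priori coincide with some $\mathbf{v}_{i_0}$, $i_0\notin I$ (which would spoil the coefficient extraction from the independence of $P(k,\mathcal{V}_2)$); this cannot occur, since then $I\cup\{i_0\}$ would violate condition (II) (or one can appeal to Lemma~\ref{lem21}), so the argument is complete.
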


\begin{lem}\label{lemn31}
Let $k,m\geqslant 1$ and $n\geqslant 2$ be integers. Suppose $\mathcal{V}=\{ \mathbf{v}_1,\dots,\mathbf{v}_m \} \allowbreak \subseteq \{0,1\}^{n-2} \times \mathbb{N}^2$ is a minimal counterexample to Theorem~\ref{thml2}.
Then, for some $\alpha,\beta\in \mathbb{N}$, the set $\mathcal{V}$ contains the vectors
\[
  (\underbrace{1,\dots,1}_{n-2},\alpha,0) \quad \text{ and } \quad(\underbrace{1,\dots,1}_{n-2},0,\beta).
\]
Moreover, they are unique in $\mathcal{V}$ with respect to having $n$-th or $(n-1)$-th entry equal to $0$.
\end{lem}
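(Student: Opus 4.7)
The plan is to combine the three earlier structural lemmas with a compression argument and a case split. By Lemma~\ref{lem22}, for each coordinate $j \in [n]$ there exists $\mathbf{v}_i \in \mathcal{V}$ with $\mathbf{v}_i(j) = 0$; in particular, the sets $I_n := \{i : \mathbf{v}_i(n) = 0\}$ and $I_{n-1} := \{i : \mathbf{v}_i(n-1) = 0\}$ are nonempty. The argument then splits into two cases for each of $I_n$ and $I_{n-1}$; I describe the analysis for $I_n$, the other being symmetric.

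In \emph{Case A}, every vector of $I_n$ already has its first $n-2$ coordinates equal to $1$. Then each such vector is of the form $(1,\ldots,1,\alpha_i,0)$, and any two distinct such vectors are comparable coordinate-wise; by Lemma~\ref{lem21} this forces $|I_n|=1$, yielding the required vector $(1,\ldots,1,\alpha,0)$ together with its uniqueness. In \emph{Case B}, there exist $i_* \in I_n$ and $j_0 \in [n-2]$ with $\mathbf{v}_{i_*}(j_0) = 0$, and I aim to derive a contradiction by producing a strictly smaller counterexample. Set $\mathbf{v}_{i_*}' := \mathbf{v}_{i_*} + e_{j_0}$ and $\mathcal{V}' := (\mathcal{V} \setminus \{\mathbf{v}_{i_*}\}) \cup \{\mathbf{v}_{i_*}'\}$. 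A direct check shows $\mathcal{V}'$ again satisfies $V_2(k)$: property (III) is preserved because we bump a coordinate in $[n-2]$ from $0$ to $1$; (I) holds whenever $|\mathbf{v}_{i_*}| \leqslant k-2$; and for (II) applied to any subset $I \ni i_*$, the $-1$ in $k - |\mathbf{v}_{i_*}'|$ is offset by an increase of at most $+1$ in $|\mu_{\mathcal{V}'}(I)|$. Each element of $P(k, \mathbf{v}_{i_*}')$ lies in the span of $P(k, \mathbf{v}_{i_*})$ via the identity $(x-x_{j_0}) x^e h(x) = x^{e+1} h(x) - x_{j_0} x^e h(x)$, where $h(x) = \prod_j (x-x_j)^{\mathbf{v}_{i_*}(j)}$; transferring the nontrivial dependence in $P(k, \mathcal{V})$ to one in $P(k, \mathcal{V}')$ produces a counterexample with $|P(k, \mathcal{V}')| = |P(k, \mathcal{V})| - 1$, contradicting the minimality of the parameter $d$.

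The symmetric argument on $I_{n-1}$ (exchanging the roles of the $(n-1)$-th and $n$-th coordinates) then produces the second vector $(1,\ldots,1,0,\beta) \in \mathcal{V}$ together with its uniqueness.

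\emph{Main obstacle.} The delicate step is the transfer of linear dependence in Case B. A dimension count shows that $\mathrm{rank}(P(k,\mathcal{V}'))$ can drop from $\mathrm{rank}(P(k,\mathcal{V}))$ by either $0$ or $1$ while $|P(k,\mathcal{V}')|$ always drops by exactly $1$, so we need a strict rank drop. I expect this to be established by using the essentially unique nontrivial relation $\sum_i g_i(x) h_i(x) = 0$ (where $h_i = \prod_j (x-x_j)^{\mathbf{v}_i(j)}$, and all $g_i$ are nonzero in a minimal counterexample by a standard argument using minimality of $m$) and choosing $j_0$ so that $g_{i_*}(x_{j_0}) \neq 0$. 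The borderline case $|\mathbf{v}_{i_*}| = k-1$, in which (I) blocks the bump, will also require separate treatment, presumably by exploiting the tightness of $\{i_*\}$ permitted by Lemma~\ref{lem24} together with the anti-chain structure supplied by Lemma~\ref{lem21}.
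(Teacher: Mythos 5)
Your Case A and the uniqueness step are fine (they match the paper's use of Lemma~\ref{lem22} and Lemma~\ref{lem21}), and your verification that the bumped system $\mathcal{V}'$ satisfies (II) and (III) is correct. The genuine gap is exactly the step you flag as the ``main obstacle'': bumping $\mathbf{v}_{i_*}$ to $\mathbf{v}_{i_*}+e_{j_0}$ does \emph{not} preserve linear dependence, and your proposed fix does not close this. Since $\mathrm{span}\,P(k,\mathbf{v}_{i_*}+e_{j_0})\subsetneq \mathrm{span}\,P(k,\mathbf{v}_{i_*})$, a nontrivial relation $\sum_i g_ih_i=0$ for $P(k,\mathcal{V})$ transfers to $P(k,\mathcal{V}')$ only if its $i_*$-component lies in the smaller span, i.e.\ only if $(x-x_{j_0})$ divides $g_{i_*}$, which means $g_{i_*}(x_{j_0})=0$ --- the \emph{opposite} of the condition $g_{i_*}(x_{j_0})\neq 0$ you ask for. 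Worse, $j_0$ is not a free parameter: it must be a coordinate in $[n-2]$ where $\mathbf{v}_{i_*}$ vanishes, so you cannot choose it to satisfy any condition on $g_{i_*}$. The auxiliary claims are also unestablished: ``essential uniqueness'' of the relation does not follow from minimality of $d$ (deleting a single polynomial from some $P(k,\mathbf{v}_i)$ is not of the form $P(k,\mathcal{W})$ for a smaller admissible $\mathcal{W}$), and the borderline case $|\mathbf{v}_{i_*}|=k-1$ is left open. Note also that the rescue used later in Lemma~\ref{lemn32} (replace the bump by the observation that $P(k,\mathcal{V})$ and $P(k,\mathcal{V}')\cup\{h_{i_*}\}$ span the same space, conclude $h_{i_*}\in\mathrm{span}\,P(k,\mathcal{V}')$, and contradict divisibility by $(x-x_n)$) is unavailable here, because the divisibility argument needs every other vector to be nonzero in the bumped coordinate --- which is precisely the uniqueness statement this lemma is supposed to prove.

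The paper avoids all of this by making a different move in your Case B: instead of bumping, it \emph{merges} coordinates, replacing each $\mathbf{v}_i$ by $(\mathbf{v}_i(1),\dots,\mathbf{v}_i(n-3),\mathbf{v}_i(n-2)+\mathbf{v}_i(n),\mathbf{v}_i(n-1))\in\mathbb{N}^{n-1}$, which at the polynomial level is the substitution $x_n\mapsto x_{n-2}$. This is a ring homomorphism, so linear dependence of $P(k,\mathcal{V})$ is preserved automatically --- no relation has to be tracked --- and the assumption $\mathbf{v}_{i_*}(j_0)=\mathbf{v}_{i_*}(n)=0$ (after relabelling $i_*=m$, $j_0=n-2$) is used only to verify property (II) for the merged system, via Lemma~\ref{lem24} exactly as in Lovett's Lemma 2.5. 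The resulting counterexample has parameters $(n-1,k,m,d)$, contradicting minimality in the \emph{first} lexicographic coordinate $n$, rather than in $d$ as you attempt. You would need to replace your Case B argument by this (or an equivalent dependence-preserving) reduction for the proof to go through.
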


\begin{proof}
First assume $n=2$. By Lemma~\ref{lem22}, we know that for some $\alpha,\beta\in \mathbb{N}$, the vectors $(\alpha,0)$ and $(0,\beta)$ are in $\mathcal{V}$. Furthermore, by Lemma~\ref{lem21}, if for some $\alpha'\in \mathbb{N}$ the vector $(\alpha',0)$ is in $\mathcal{V}$ then $(\alpha',0)=(\alpha,0)$. 
Similarly, if for some $\beta'\in \mathbb{N}$ the vector $(0,\beta')$ is in $\mathcal{V}$ then $(0,\beta')=(0,\beta)$.

Now assume $n\geqslant 3$. 
By Lemma~\ref{lem22}, we know that there exists $i' \in [m]$ such that $\mathbf{v}_{i'}(n)=0$. 
We will show that $\mathbf{v}_{i'}=(1,\dots,1,\alpha,0)$ for some $\alpha\in \mathbb{N}$. 
Suppose (for a contradiction) that there exists $j'\in [n-2]$ such that $\mathbf{v}_{i'}(j')=0$. 
Without loss of generality, assume that $i'=m$ and $j'=n-2$. Let us define a new set of vectors $\mathcal{V}'=\{ \mathbf{v}_1',\dots,\mathbf{v}_m' \}\subseteq \mathbb{N}^{n-1}$ where
$$
\mathbf{v}_i'=(\mathbf{v}_i(1),\dots,\mathbf{v}_i(n-3),\mathbf{v}_i(n-2)+\mathbf{v}_i(n),\mathbf{v}_i(n-1))
$$
for all $i\in [m]$. 
It is clear that $\mathcal{V}'$ has properties (I) and (III). 
To prove that $\mathcal{V}'$ satisfies (II), we use the same steps as in the proof of Lemma 2.5 in \cite{lovett2018proof}.
For completeness we include these steps below.

Let $I \subseteq [m]$.
Clearly (II) holds if $|I|=1$, so suppose $|I|>1$.
We have
\begin{align}
\sum_{i\in I}(k-|\mathbf{v}_i'|)+|\mu_{\mathcal{V}'}(I)|=\sum_{i\in I}(k-|\mathbf{v}_i|)+|\mu_{\mathcal{V}}(I)|+\delta, \label{eqn:3}
\end{align}
where $\delta=\text{min}_{i\in I}(\mathbf{v}_i(n-2)+\mathbf{v}_i(n))-\text{min}_{i\in I}\mathbf{v}_i(n-2)-\text{min}_{i\in I}\mathbf{v}_i(n)$.
Suppose $|I|<m$ so, by Lemma~\ref{lem24}, the subset $I$ is not tight for $\mathcal{V}$.
Thus
$$
\sum_{i\in I}(k-|\mathbf{v}_i|)+|\mu_{\mathcal{V}}(I)|\leqslant k-1.
$$
Note that $\mathbf{v}_i(n-2)\in \{0,1\}$ for all $i\in I$ since $\mathcal{V}$ satisfies (III).
It follows that $\text{min}_{i\in I}(\mathbf{v}_i(n-2)+\mathbf{v}_i(n))\leqslant \text{min}_{i\in I}(1+\mathbf{v}_i(n))=1+\text{min}_{i\in I}\mathbf{v}_i(n)$.
Hence $\delta \leqslant 1-\text{min}_{i\in I}\mathbf{v}_i(n-2) \leqslant 1$.
From $\eqref{eqn:3}$ we obtain
$$
\sum_{i\in I}(k-|\mathbf{v}_i'|)+|\mu_{\mathcal{V}'}(I)|\leqslant k-1+\delta \leqslant k.
$$
Now suppose $|I|=m$.
Here we have $\mathbf{v}_m(n-2)=\mathbf{v}_m(n)=0$.
This implies that $\delta=0$ and, from \eqref{eqn:3}, we find that
$$
\sum_{i\in I}(k-|\mathbf{v}_i'|)+|\mu_{\mathcal{V}'}(I)|=\sum_{i\in I}(k-|\mathbf{v}_i|)+|\mu_{\mathcal{V}}(I)|\leqslant k.
$$
In any case, the set $\mathcal{V}'$ satisfies (II).
Therefore $\mathcal{V}'$ satisfies $V_2(k)$ and $\mathcal{V}'$ has parameters $(n-1,k,m,d)$.

Observe that each polynomial in $P(k,\mathcal{V}')$ can be obtained from a polynomial in $P(k,\mathcal{V})$ by substituting $x_{n-2}$ for $x_n$. 
This operation preserves linear dependence and hence, since $P(k,\mathcal{V})$ is linearly dependent over $\mathbb{F}_q(x_1,\dots,x_n)$, so too is $P(k,\mathcal{V}')$. 
But this contradicts the minimality of $\mathcal{V}$. Therefore, $(1,\dots,1,\alpha,0)$ belongs to $\mathcal{V}$ for some $\alpha\in \mathbb{N}$. 

Furthermore, suppose that there exists a vector $\mathbf{w}$ in $\mathcal{V}$ such that $\mathbf{w}(n)=0$. 
Using the same argument as above, we can conclude that $\mathbf{w}=(1,\dots,1,\alpha',0)$ for some $\alpha'\in \mathbb{N}$. 
By Lemma~\ref{lem21}, we must have $\mathbf{w}=(1,\dots,1,\alpha,0)$. 
In short, the vector in $\mathcal{V}$ where its last coordinate is zero is unique and it takes the form of $(1,\dots,1,\alpha,0)$ for some $\alpha\in \mathbb{N}$. 

Similarly, the vector $(1,\dots,1,0,\beta)$ is in $\mathcal{V}$ for some $\beta\in \mathbb{N}$ and there is only one vector in $\mathcal{V}$ where its second to last coordinate is zero.
\end{proof}

\begin{lem}\label{lemn32}
Let $k,m\geqslant 1$ and $n\geqslant 2$ be integers. 
Suppose $\mathcal{V}=\{ \mathbf{v}_1,\dots,\mathbf{v}_m \} \allowbreak \subseteq \{0,1\}^{n-2} \times \mathbb{N}^2$ is a minimal counterexample to Theorem~\ref{thml2}.
Then the set $\mathcal V$ contains the vectors
\[
  (\underbrace{1,\dots,1}_{n-2},k-n+1,0) \quad \text{ and } \quad (\underbrace{1,\dots,1}_{n-2},0,k-n+1).
\]
\end{lem}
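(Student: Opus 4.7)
The plan is to argue by contradiction using the minimality of $\mathcal V$. By Lemma~\ref{lemn31} we may write $\mathbf{v}_1=(1,\dots,1,\alpha,0)$ and $\mathbf{v}_2=(1,\dots,1,0,\beta)$ for some $\alpha,\beta \in \mathbb N$, and condition (I) already forces $\alpha,\beta\leqslant k-n+1$. It suffices to prove $\alpha=k-n+1$, because the symmetric argument swapping the roles of coordinates $n-1$ and $n$ will then give $\beta=k-n+1$. I therefore assume $\alpha\leqslant k-n$ for contradiction and aim to produce a strictly smaller counterexample.

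The modification is to set $\mathbf{v}_1^*:=(1,\dots,1,\alpha,1)$ and $\mathcal V^*:=(\mathcal V\setminus\{\mathbf{v}_1\})\cup\{\mathbf{v}_1^*\}$. Lemma~\ref{lem21} rules out $\mathbf{v}_1^*\in\mathcal V$, so $|\mathcal V^*|=m$. Conditions (I) and (III) of $V_2(k)$ for $\mathcal V^*$ are immediate; the only case of (II) requiring work is $I\ni 1$ with $|I|\geqslant 2$, where the drop of $1$ in $\sum_{i\in I}(k-|\mathbf{v}_i^*|)$ is exactly compensated by a gain of $1$ in $|\mu_{\mathcal V^*}(I)|$, using that Lemma~\ref{lemn31} guarantees $\mathbf{v}_i(n)\geqslant 1$ for all $i\neq 1$. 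Since $d(\mathcal V^*)=d(\mathcal V)-1$ and the other parameters are unchanged, minimality of $\mathcal V$ forces $P(k,\mathcal V^*)$ to be linearly independent.

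The hardest step will be to extract a contradiction from this independence. Writing $q_e:=\prod_{j=1}^{n-2}(x-x_j)(x-x_{n-1})^\alpha x^e$, so that $P(k,\mathbf{v}_1)=\{q_0,\dots,q_{k-n+1-\alpha}\}$, the identity
\[
\prod_{j=1}^{n-2}(x-x_j)(x-x_{n-1})^\alpha(x-x_n)x^e=q_{e+1}-x_n q_e
\]
expresses every element of $P(k,\mathbf{v}_1^*)$ as a combination of consecutive $q$'s; iterating it writes each $q_e$ as $x_n^e q_0$ plus a combination of elements of $P(k,\mathbf{v}_1^*)$, so $\operatorname{span}(P(k,\mathcal V))=\operatorname{span}(P(k,\mathcal V^*)\cup\{q_0\})$. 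Because $P(k,\mathcal V)$ is linearly dependent while $P(k,\mathcal V^*)$ is linearly independent of cardinality one less, a dimension count forces $q_0\in\operatorname{span}(P(k,\mathcal V^*))$. I will rule this out by substituting $x=x_n$: every $\mathbf{v}\in\mathcal V^*$ satisfies $\mathbf{v}(n)\geqslant 1$, so every polynomial in $P(k,\mathcal V^*)$ vanishes at $x=x_n$, whereas $q_0(x_n)=\prod_{j=1}^{n-2}(x_n-x_j)(x_n-x_{n-1})^\alpha$ is nonzero in $\mathbb F_q(x_1,\dots,x_n)$. This contradiction gives $\alpha=k-n+1$, and by symmetry $\beta=k-n+1$.
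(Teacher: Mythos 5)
Your proof is correct and takes essentially the same route as the paper's: replace $(1,\dots,1,\alpha,0)$ by $(1,\dots,1,\alpha,1)$, invoke minimality on the modified system (whose parameter $d$ drops by one), and derive a contradiction from the fact that $q_0$ would have to lie in the span of polynomials all divisible by $(x-x_n)$. Your telescoping identity $q_{e+1}-x_nq_e$ merely makes explicit the span equality that the paper asserts without proof, and your evaluation at $x=x_n$ is the same divisibility argument in different clothing.
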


\begin{proof}
By Lemma~\ref{lemn31}, we know that $(1,\dots,1,\alpha,0)\in \mathcal{V}$ for some $\alpha\in\mathbb{N}$. 
Without loss of generality, assume that $\mathbf{v}_m=(1,\dots,1,\alpha,0)$. 
By (I), we have $n-2+\alpha\leqslant k-1$. 
Assume towards a contradiction that $n-2+\alpha\leqslant k-2$. 
Let us define a new set of vectors $\mathcal{V}'=\{ \mathbf{v}_1',\dots,\mathbf{v}_m' \}\subseteq \mathbb{N}^n$ where
$$
\mathbf{v}_1'=\mathbf{v}_1,\dots,\mathbf{v}_{m-1}'=\mathbf{v}_{m-1},\mathbf{v}_m'=(1,\dots,1,\alpha,1).
$$
Note that if $n-2+\alpha\leqslant k-2$ then $k-\alpha-n\geqslant 0$. 
Hence $|P(k,\mathbf{v}_m')|=k-(n-1+\alpha)=k-n-\alpha+1\geqslant 1$ so there is at least one polynomial in $P(k,\mathbf{v}_m')$. 
Clearly $\mathcal{V}'$ satisfies (III) and, by our assumption, $|\mathbf{v}_m'|=n-1+\alpha\leqslant k-1$.
Hence $\mathcal{V}'$ also satisfies (I).

Now consider a subset $I\subseteq [m]$. 
If $m\notin I$ then $\mathcal{V}'$ clearly satisfies (II).
Otherwise, assume that $m\in I$.
Since $\mathbf{v}_m=(1,\dots,1,\alpha,0)$ is the unique vector in $\mathcal{V}$ with last coordinate zero, we have $|\mu_{\mathcal{V}'}(I)|=|\mu_{\mathcal{V}}(I)|+1$. 
We also have $|P(k,\mathbf{v}_m')|=|P(k,\mathbf{v}_m)|-1$ and thus
$$
\sum_{i\in I}(k-|\mathbf{v}_i'|)+|\mu_{\mathcal{V}'}(I)|=\left(\sum_{i\in I}(k-|\mathbf{v}_i|)-1\right)+(|\mu_{\mathcal{V}}(I)|+1) \leqslant k.
$$
Hence $\mathcal{V}'$ satisfies $V_2(k)$.

The set $\mathcal{V}'$ has parameters $(n,k,m,d-1)$. By minimality of $\mathcal{V}$, the polynomials in $P(k,\mathcal{V}')$ are linearly independent over $\mathbb{F}_q(x_1,\dots,x_n)$. 
If we let $r=(x-x_{n-1})^{\alpha}\prod_{j\in [n-2]}(x-x_j)$ then $P(k,\mathcal{V})$ and $P(k,\mathcal{V}')\cup \{r\}$ span the same linear space of polynomials over $\mathbb{F}_q(x_1,\dots,x_n)$. 
It follows that $P(k,\mathcal{V}')\cup \{r\}$ is linearly dependent over $\mathbb{F}_q(x_1,\dots,x_n)$ while $P(k,\mathcal{V}')$ is linearly independent over $\mathbb{F}_q(x_1,\dots,x_n)$. 

Hence we can write $r$ as a linear combination of polynomials in $P(k,\mathcal{V}')$ over $\mathbb{F}_q(x_1,\dots,x_n)$. 
However, the uniqueness of $\mathbf{v}_m=(1,\dots,1,\alpha,0)$ in $\mathcal{V}$ implies that any polynomial in $P(k,\mathcal{V}')$ is divisible by $(x-x_n)$. 
Thus we obtain a contradiction since $(x-x_n)$ does not divide $r$. 
Therefore, $n-2+\alpha=k-1$, which means that $(1,\dots,1,\alpha,0)=(1,\dots,1,k-n+1,0)$. 

Adopting the same method as above, we can also obtain $(1,\dots,1,0,\beta)=(1,\dots,1,0,k-n+1)$. 
Therefore, we have $\alpha=\beta=k-n+1$.
\end{proof}

Now we are ready to prove Theorem~\ref{thml2}.

\begin{proof}[Proof of Theorem~\ref{thml2}]
Suppose that there is a minimal counterexample $\mathcal{V}=\{ \mathbf{v}_1,\dots,\mathbf{v}_m \} \subseteq \{0,1\}^{n-2} \times \mathbb{N}^2$. 
We will derive a contradiction to $\mathcal{V}$ being a counterexample.

By Lemma~\ref{lemn32}, the vector $(1,\dots,1,k-n+1,0)$ is in $\mathcal{V}$. 
Assume without loss of generality that $\mathbf{v}_m=(1,\dots,1,k-n+1,0)$. 
Let $\mathcal{V}'=\{\mathbf{v}_1,\dots,\mathbf{v}_{m-1}\}$ so $\mathcal{V}'$ still satisfies $V_2(k)$. By minimality of $\mathcal{V}$, the set $P(k,\mathcal{V}')$ is linearly independent over $\mathbb{F}_q(x_1,\dots,x_n)$. 
Moreover, since $|\mathbf{v}_m|=k-1$, we have $P(k,\mathcal{V})=P(k,\mathcal{V}')\cup\{r\}$ where $r=(x-x_{n-1})^{k-n+1}\prod_{j\in [n-2]}(x-x_j)$. 
Any polynomial in $P(k,\mathcal{V}')$ is divisible by $(x-x_n)$ while $r$ is not. 
Since $P(k,\mathcal{V})$ is linearly dependent, we can write $r$ as a linear combination of polynomials in $P(k,\mathcal{V}')$ over $\mathbb{F}_q(x_1,\dots,x_n)$, which contradicts that $(x-x_n)$ does not divide $r$.
\end{proof}

Note that, for any $n\geqslant 1$, if $\mathcal{V}\subseteq \mathbb{N}^n$ satisfies $V_0(k)$ then $\mathcal{V}$ satisfies $V_1(k)$.
Combining this with Theorem~\ref{thml1} and Theorem~\ref{thml2}, we can state Theorem~\ref{thml012} below.

\begin{theorem}\label{thml012}
Let $k,m,n\geqslant 1$ and $l\geqslant 0$ be integers where $n\geqslant l$ and let $\mathcal{V}=\{ \mathbf{v}_1,\dots,\mathbf{v}_m \} \subseteq \mathbb{N}^n$. Assume that $l\leqslant 2$ and $\mathcal{V}$ satisfies $V_l(k)$. Then the polynomials in $P(k,\mathcal{V})$ are linearly independent over $\mathbb{F}_q(x_1,\dots,x_n)$.
\end{theorem}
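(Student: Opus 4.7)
The plan is to deduce Theorem~\ref{thml012} by a short case analysis on $l$, invoking the two previously established results Theorem~\ref{thml1} and Theorem~\ref{thml2}. The pivotal observation, already highlighted in the remark following Definition~\ref{dfnl}, is the monotonicity of the properties $V_l(k)$: if $\mathcal{V}\subseteq\mathbb{N}^n$ satisfies $V_l(k)$, then it also satisfies $V_{l'}(k)$ for every $l'$ with $l\leqslant l'\leqslant n$. Combined with the remark stated in the paragraph immediately preceding Theorem~\ref{thml012} (namely, that $V_0(k)$ implies $V_1(k)$ for any $n\geqslant 1$), this lets us lift every hypothesis $V_l(k)$ with $l\leqslant 2$ to either $V_1(k)$ or $V_2(k)$.

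First I would handle the case $l\in\{0,1\}$. By the monotonicity observation above, $\mathcal{V}$ satisfies $V_1(k)$ (when $l=0$ this uses the remark, and when $l=1$ there is nothing to check). Since $n\geqslant 1$, Theorem~\ref{thml1} applies directly and yields that $P(k,\mathcal{V})$ is linearly independent over $\mathbb{F}_q(x_1,\dots,x_n)$.

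Next I would handle the case $l=2$. Here the hypothesis $n\geqslant l$ becomes $n\geqslant 2$, which is exactly the hypothesis of Theorem~\ref{thml2}, and $\mathcal{V}$ satisfies $V_2(k)$ by assumption, so Theorem~\ref{thml2} applies verbatim and again delivers the desired linear independence.

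I do not anticipate any genuine obstacle, since Theorem~\ref{thml012} is a consolidation statement whose entire content is the union of Theorem~\ref{thml1} and Theorem~\ref{thml2} via the inclusion hierarchy of the properties $V_l(k)$. The only bookkeeping is to verify that the parameter constraints of Theorem~\ref{thml012} (in particular $n\geqslant l$ together with $n,k,m\geqslant 1$) match up with the constraints of the two constituent theorems in each subcase, and the discussion above shows that they do.
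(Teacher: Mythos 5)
Your proposal is correct and matches the paper's argument exactly: the paper likewise observes that $V_0(k)$ implies $V_1(k)$ for $n\geqslant 1$ and then combines Theorem~\ref{thml1} (for $l\leqslant 1$) with Theorem~\ref{thml2} (for $l=2$, where $n\geqslant l$ supplies the hypothesis $n\geqslant 2$). Nothing further is needed.
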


\section*{Acknowledgments}

The authors have benefited from conversations with Han Mao Kiah.


\bibliographystyle{abbrvnat}
\bibliography{references}

\end{document}